\documentclass[12pt]{amsart}

\usepackage{amsfonts, amsthm, amsmath}

\usepackage{rotating}
\usepackage{tikz}

\usepackage{amscd}

\usepackage[latin2]{inputenc}

\usepackage{t1enc}

\usepackage[mathscr]{eucal}

\usepackage{indentfirst}

\usepackage{graphicx}

\usepackage{graphics}

\usepackage{pict2e}

\usepackage{mathrsfs}

\usepackage{enumerate}
\usepackage[pagebackref]{hyperref}
\hypersetup{backref, pagebackref, colorlinks=true}
%%%%%%%%%%%%%%%%%%
\usepackage{cite}
\usepackage{color}
\usepackage{epic}
\usepackage{hyperref} %this gives clickable references, which is nice
% and is helpful in editing.
%\usepackage{pdfsync} %temporary - helpful in editing (with TeXShop)
%\usepackage{showkeys} %temporary - helpful in editing
%%It works very well with my version of TeX (TeXShop) but
%%if it causes you any problems, you can delete it. We can delete it after finished.
\numberwithin{equation}{section}
\topmargin 0.8in
\textheight=8.2in
\textwidth=6.4in
\voffset=-.68in
\hoffset=-.68in

\theoremstyle{plain}

\newtheorem{theorem}{Theorem}[section]

\newtheorem{corollary}[theorem]{Corollary}

\newtheorem{proposition}[theorem]{Proposition}

\theoremstyle{definition}

\newtheorem{Def}[theorem]{Definition}

\newtheorem{example}[theorem]{Example}

\newtheorem{remark}[theorem]{Remark}

\newtheorem{?}[theorem]{Problem}

\newcommand{\N}{\mathbb{N}}

\def\G{\mathfrak{G}}

\def\D{\mathcal{D}}
\def\F{\mathcal{F}}
\def\G{\mathcal{G}}
\def\O{\mathcal{O}}
\def\P{\mathcal{P}}
\def\H{\mathcal{H}}

\def\E{\mathcal{E}}

\def\G{\mathfrak{G}}
\def\rank{\mathrm{rank}}

\def\boxit#1{\leavevmode\hbox{\vrule\vtop{\vbox{\kern.33333pt\hrule
    \kern1pt\hbox{\kern1pt\vbox{#1}\kern1pt}}\kern1pt\hrule}\vrule}}

\usepackage{collectbox}

\makeatletter

\makeatother

\begin{document}

\title[Fixed largest hook length]{Partitions with fixed largest hook length}

\author[S. Fu]{Shishuo Fu}
\address[Shishuo Fu]{College of Mathematics and Statistics, Chongqing University, Huxi campus LD301, Chongqing 401331, P.R. China}
\email{fsshuo@cqu.edu.cn}

\author[D. Tang]{Dazhao Tang}

\address[Dazhao Tang]{College of Mathematics and Statistics, Chongqing University, Huxi campus LD208, Chongqing 401331, P.R. China}
\email{dazhaotang@sina.com}

\date{\today}

\begin{abstract}
Motivated by a recent paper of Straub, we study the distribution of integer partitions according to the length of their largest hook, instead of the usual statistic, namely the size of the partitions. We refine Straub's analogue of Euler's Odd-Distinct partition theorem, derive a generalization in the spirit of Alder's conjecture, as well as a curious analogue of the first Rogers-Ramanujan identity. Moreover, we obtain a partition theorem that is the counterpart of Euler's pentagonal number theory in this setting, and connect it with the Rogers-Fine identity. We concludes with some congruence properties.
\end{abstract}

\keywords{Euler's partition theorem; Euler's pentagonal number theorem; Rogers-Ramanujan identity; Rogers-Fine identity; Fibonacci number.}

\maketitle

%\tableofcontents

%%%%%%%%%%%%%%%%%%%%%%%%%%%%%%%%%%%%%
\section{Introduction}\label{sec1}
%%%%%%%%%%%%%%%%%%%%%%%%%%%%%%%%%%%%%

A \emph{partition} \cite{Andr} $\pi$ of an integer $n\in\N$ is a finite sequence of positive integers $(\pi_{1}, \pi_{2}, \ldots, \pi_{r})$ such that $\pi_{1}\geq \pi_{2}\geq\cdots\geq \pi_{r}\geq 1$ and $\pi_{1}+\pi_{2}+\cdots+\pi_{r}=n$. When $n=0$, we consider the empty partition as the only partition of $0$, and for the most part of this paper, we choose to neglect the empty partition. By convention \cite{Andr}, the integers $\pi_{1}, \pi_{2}, \cdots, \pi_{r}$ are called the \emph{parts} of $\pi$, with $\pi_{1}$ being its largest part, $\ell(\pi):=r$ the number of parts and $|\pi|:=n$ the \emph{size} of $\pi$. Such a partition $\pi$ is frequently represented by its \emph{Young diagram} (or \emph{Ferrers graph}) \cite[Chapter 1.3]{Andr}, which we take to be a left-justified array of square boxes with $r$ rows such that the $i$-th row consists of $\pi_{i}$ boxes (see Figure~\ref{yd}). This graphical representation of partitions naturally gives rise to further statistics defined on each partition. A notable one is the notion of \emph{hook length}. Each box $\nu$ is assigned a \emph{hook}, which is composed of the box $\nu$ itself as well as boxes to the right of $\nu$ (which is called the \emph{arm}, with its length denoted as $a(\nu)$) and below $\nu$ (which is called the \emph{leg}, with its length denoted as $l(\nu)$). The \emph{hook length} of $\nu$ is the number of boxes the hook consists of, i.e., $h(\nu)=a(\nu)+l(\nu)+1$. In particular, the largest hook of $\pi$, with length denoted as $\Gamma(\pi)$, traverses the leftmost column as well as the topmost row of cells, i.e.,
\begin{align}\label{Gamma}
\Gamma(\pi)=\pi_1+\ell(\pi)-1.
\end{align}
%A partition $\pi$ is said to be a \emph{$t$-core} if $\pi$ has no box of hook length equal to $t$. Similarly, a partition $\pi$ is called a \emph{($s,t$)-core} if it is at the same time an $s$-core and a $t$-core.
%%, or equivalently, it has no box of hook length with the multiples of $t$.
%The motivation for considering simultaneous cores and enumerating them is given, for example, in \cite{Ande, Arm, CEK, OS}.

%%%%%%%%%%Fig 1%%%%%%%%%%%%%%%%%%%%%%%%%
\begin{figure}
\begin{tikzpicture}[scale=0.8]
% grid
\draw (0,0) grid (1,3);
\draw (1,1) grid (2,3);
% lable
\draw (0.5,0.5) node{$1$};
\draw (0.5,1.5) node{$3$};
\draw (0.5,2.5) node{$4$};
\draw (1.5,1.5) node{$1$};
\draw (1.5,2.5) node{$2$};
\end{tikzpicture}
\caption{Young diagram for $\pi=(2,2,1)$ with hook lengths filled in}
\label{yd}
\end{figure}

In 1748, Euler \cite{Eu} proved arguably the most celebrated partition theorem. When phrased in ``partition language'', it goes as follows.
\begin{theorem}\label{od}
Given any $n\in\N$, there are as many partitions of $n$ into distinct parts as into odd parts.
\end{theorem}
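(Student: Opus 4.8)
The plan is to give the classical generating-function proof. First I would translate both counting problems into identities of formal power series in $\mathbb{Z}[[q]]$. A partition into distinct parts uses each positive integer $k$ as a part either zero or one times, so its generating function by size is $\prod_{k\geq 1}(1+q^{k})$; a partition into odd parts may repeat each odd part arbitrarily often, so its generating function by size is $\prod_{j\geq 1}(1-q^{2j-1})^{-1}$. These infinite products are well defined in $\mathbb{Z}[[q]]$ because only finitely many factors influence the coefficient of any given $q^{n}$, so it suffices to show the two products coincide.

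Second, I would exploit the identity $1+q^{k}=(1-q^{2k})/(1-q^{k})$, valid in $\mathbb{Z}[[q]]$, to obtain
\[
\prod_{k\geq 1}\bigl(1+q^{k}\bigr)=\prod_{k\geq 1}\frac{1-q^{2k}}{1-q^{k}}=\frac{\prod_{k\geq 1}(1-q^{2k})}{\prod_{k\geq 1}(1-q^{k})},
\]
and then cancel: every factor $1-q^{m}$ with $m$ even occurs once in the numerator and once in the denominator, so what survives is exactly $\prod_{m\ \mathrm{odd}}(1-q^{m})^{-1}$, the odd-part generating function. Comparing the coefficient of $q^{n}$ on both sides then gives Theorem~\ref{od}. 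One should be slightly careful that this rearrangement and cancellation of factors is legitimate in $\mathbb{Z}[[q]]$; this holds because the partial products converge $q$-adically.

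Alternatively — and more in the bijective spirit of the rest of the paper — I would present Glaisher's bijection directly. Given a partition of $n$ into odd parts in which the odd value $a$ occurs with multiplicity $m_{a}$, write $m_{a}$ in binary as $m_{a}=\sum_{i}2^{e_{i}}$ and replace those $m_{a}$ copies of $a$ by the parts $2^{e_{i}}a$; the resulting parts are pairwise distinct because every positive integer factors uniquely as a power of $2$ times an odd number. The inverse map takes a partition into distinct parts, writes each part as $2^{e}b$ with $b$ odd, and replaces it by $2^{e}$ copies of $b$. One checks these maps are size-preserving and mutually inverse.

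There is no genuine obstacle here, since Theorem~\ref{od} is classical; the only real work is bookkeeping. In the analytic proof it is the justification of the infinite-product manipulations in $\mathbb{Z}[[q]]$, and in the combinatorial proof it is verifying that Glaisher's map and its stated inverse are well defined — both of which reduce to the uniqueness of the factorization $n=2^{e}\cdot(\mathrm{odd})$.
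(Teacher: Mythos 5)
Your proof is correct; note that the paper does not actually prove Theorem~\ref{od} itself but cites it as Euler's classical result, remarking that Euler's original argument was via generating functions --- which is exactly the product-manipulation proof you give, so your primary argument coincides with the approach the paper points to (and your Glaisher bijection is a standard, equally valid alternative).
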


In the following some three hundred years, numerous partition theorems have been discovered and stated in the form of ``there are as many partitions of $n$ satisfying condition $A$ as those satisfying condition $B$''. All of these results keep the number being partitioned, i.e., the size of the partition fixed. In a recent paper by Straub \cite{Str}, however, he chose to fix the largest hook length instead, and obtained among other things, the following nice analogue of Theorem~\ref{od}.
\begin{theorem}[Theorem 1.4 in \cite{Str}]
\label{ana-od}
The number of partitions into distinct parts with perimeter $M$ is equal to the number of partitions into odd parts with perimeter $M$. Both are enumerated by the Fibonacci number $F_M$.
\end{theorem}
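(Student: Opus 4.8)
The plan is to compute the generating function $\sum_{\pi}q^{\Gamma(\pi)}$ twice — once summing over all partitions $\pi$ into distinct parts, once over all partitions into odd parts — and to verify that in both cases the answer is $q/(1-q-q^2)=\sum_{M\geq1}F_Mq^M$ (with $F_1=F_2=1$). This delivers the equidistribution and the Fibonacci evaluation simultaneously. A more bijective route would be to test whether Sylvester's classical bijection from odd to distinct partitions preserves the perimeter; but since $\Gamma(\pi)=\pi_1+\ell(\pi)-1$ is a genuinely two-parameter statistic, checking that any such bijection respects it is exactly where the difficulty would lie, so I would fall back on generating functions.

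\textbf{Step 1: reparametrize each family by its number of parts.} Fix $r\geq1$ and recall $\Gamma(\pi)=\pi_1+\ell(\pi)-1$ from \eqref{Gamma}. If $\pi$ has $r$ distinct parts, then $\pi_i\geq r-i+1$, so $\nu_i:=\pi_i-(r-i+1)\geq0$; since consecutive distinct parts differ by at least $1$, $\nu$ is weakly decreasing, i.e.\ $\nu$ is a partition with at most $r$ parts, and $\pi\mapsto\nu$ is a bijection onto all such $\nu$. Here $\pi_1=r+\nu_1$, hence $\Gamma(\pi)=2r-1+\nu_1$, where $\nu_1$ is the largest part of $\nu$ (taken to be $0$ if $\nu=\emptyset$). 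If instead $\pi$ has $r$ odd parts, put $b_i:=(\pi_i-1)/2\geq0$; then $b$ is again a partition with at most $r$ parts, the correspondence $\pi\mapsto b$ is a bijection onto all such $b$, and $\Gamma(\pi)=2b_1+r$.

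\textbf{Step 2: the largest-part generating function over a box, and assembly.} For fixed $r$, the number of partitions with at most $r$ parts whose largest part equals $m$ is $\binom{m+r-1}{r-1}$ (delete the first part; what remains fits in an $(r-1)\times m$ rectangle), so $\sum_{\ell(\nu)\leq r}q^{\nu_1}=\sum_{m\geq0}\binom{m+r-1}{r-1}q^m=(1-q)^{-r}$. Combining this with Step 1,
\[
\sum_{\pi\text{ distinct}}q^{\Gamma(\pi)}=\sum_{r\geq1}q^{2r-1}(1-q)^{-r}=\frac1q\sum_{r\geq1}\left(\frac{q^2}{1-q}\right)^{\!r}=\frac{q}{1-q-q^2},
\]
\[
\sum_{\pi\text{ odd}}q^{\Gamma(\pi)}=\sum_{r\geq1}q^{r}(1-q^2)^{-r}=\sum_{r\geq1}\left(\frac{q}{1-q^2}\right)^{\!r}=\frac{q}{1-q-q^2}.
\]
Finally $(1-q-q^2)\sum_{M\geq1}F_Mq^M=q$ by the recurrence $F_M=F_{M-1}+F_{M-2}$, so both sums equal $\sum_{M\geq1}F_Mq^M$; extracting the coefficient of $q^M$ proves the theorem. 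The only place demanding care is Step 1 — confirming that the two shifts genuinely are bijections onto the set of partitions with at most $r$ parts (for distinct parts one uses $\pi_i-\pi_{i+1}\geq1$; for odd parts it is immediate) and that $\Gamma$ transforms as stated; the rest is a one-line geometric-series computation.
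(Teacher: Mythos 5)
Your proof is correct, but it follows a different route from the paper's. The paper first encodes each partition by the $E$/$N$ word of its profile (the lattice path bounding the Young diagram), observes that $\Gamma(\pi)$ is one less than the length of this word, and translates ``distinct parts'' into ``no two consecutive $N$'s'' and ``odd parts'' into ``evenly many $E$'s between $N$'s''; this yields the refined generating functions \eqref{gfHD} and \eqref{gfHO} in the variables $x^{\pi_1}y^{\ell(\pi)}q^{\Gamma(\pi)}$, and the theorem follows by setting $x=y=1$. You instead fix the number of parts $r$, strip the staircase from a distinct partition (resp.\ halve the parts of an odd partition) to land on partitions with at most $r$ parts, and sum the largest-part generating function $(1-q)^{-r}$ over $r$; both geometric series collapse to $q/(1-q-q^2)$. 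Your reparametrizations are genuine bijections and the bookkeeping of $\Gamma(\pi)=2r-1+\nu_1$ and $\Gamma(\pi)=2b_1+r$ is right, so the argument is complete. What you lose relative to the paper is the uniformity of the profile encoding: the paper's single device simultaneously produces the bivariate refinements used later for Theorems~\ref{ref1}--\ref{ana-pent} (e.g.\ setting $y=-1$ for the pentagonal analogue) and extends verbatim to $d$-distinct partitions in \eqref{gfHd}--\eqref{gfMd}. What you gain is a proof using only classical partition manipulations (staircase subtraction, box counting) with no new combinatorial model; and since your Step 1 already tracks $r=\ell(\pi)$ and the largest part, it could be upgraded to recover the full $H_{\D}(x,y,q)$ and $H_{\O}(x,y,q)$ with a little more effort.
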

Note that Straub's use of the term \emph{perimeter} was following Corteel and Lovejoy \cite[Section 4.2]{CL} (up to a shift by $1$), and it is equivalent to our largest hook length $\Gamma(\pi)$ for a given partition $\pi$.

Straub commented on his own analogue as ``missing from the literature on partitions'', and it certainly motivates us to explore further down the road and reveal more analogues with $\Gamma(\pi)$ replacing $|\pi|$, which have been unfortunately overlooked in the past. We present one of our results below, which can be viewed as an analogue of Euler's famous pentagonal number theorem \cite{Eu1}, see also \cite[Corollary~1.7]{Andr}.
\begin{Def}
For all positive integer $n$, let $\H(n)$ (resp. $h(n)$) be the set (resp. the number) of partitions $\pi$ with $\Gamma(\pi)=n$. Moreover, we consider the following subsets with respect to further restrictions, and their cardinalities will be denoted as $h_{\star}(n)$ respectively.
\begin{itemize}
\item $\H_{\D}(n):$ the set of partitions in $\H(n)$ with distinct parts;
\item $\H_{\O}(n):$ the set of partitions in $\H(n)$ with purely odd parts;
\item $\H_{\D,\O}(n):$ the set of partitions in $\H(n)$ with distinct parts and the number of parts being odd;
\item $\H_{\D,\E}(n):$ the set of partitions in $\H(n)$ with distinct parts and the number of parts being even.
\end{itemize}
\end{Def}

\begin{theorem}\label{ana-pent}
For all positive integer $n$,
\begin{align}\label{id:ana-pent}
e(n) &:=h_{\D,\E}(n)-h_{\D,\O}(n)=
\begin{cases}
0\, &n\equiv 0,3\pmod {6};\cr -1\, &n\equiv 1,2\pmod {6};\cr 1\, &n\equiv 4,5\pmod {6}.\end{cases}
\end{align}
\end{theorem}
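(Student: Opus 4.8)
The plan is to set up a generating function for the signed count $e(n)=\sum_{n\geq 1}\bigl(h_{\D,\E}(n)-h_{\D,\O}(n)\bigr)q^n$ and identify it with a simple rational (or theta-like) series whose coefficients exhibit the claimed period-$6$ pattern. A partition $\pi$ with distinct parts and $\Gamma(\pi)=n$ is determined by its Young diagram's largest hook: writing $\pi_1=a$ for the largest part and $\ell(\pi)=b$ for the number of parts, relation~\eqref{Gamma} gives $a+b-1=n$, so $a+b=n+1$. First I would observe that a partition into distinct parts with exactly $b$ parts and largest part $a$ corresponds, after subtracting the staircase $(b-1,b-2,\dots,1,0)$ from the parts, to an ordinary partition fitting inside a $b\times(a-b+1)$ box; hence the number of such $\pi$ is the Gaussian binomial coefficient $\qbin{a-1}{b-1}{q}$ evaluated appropriately — but since here we are \emph{not} tracking $|\pi|$, only $\Gamma$, each choice of $(a,b)$ with $a\geq b\geq 1$ and $a+b=n+1$ contributes exactly $1$ (the shape of the largest hook forces $a\geq b$ is not quite right — rather $a\geq 1$, $b\geq 1$ are the only constraints once we allow the parts below/right to be filled; I will need to recount). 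So I expect that $h_{\D}(n)=$ (number of pairs $(a,b)$ with $a+b=n+1$, together with a free choice of a sub-partition in the box), which, when we only fix $\Gamma$, collapses to $h_{\D}(n)=F_n$ by Theorem~\ref{ana-od}, and the key refinement is that $\ell(\pi)=b$ contributes the sign $(-1)^b$.

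The key step, then, is to compute $\sum_{\pi}(-1)^{\ell(\pi)}$ over all distinct-part partitions with $\Gamma(\pi)=n$. I would encode a distinct-part partition by its \emph{first column} and \emph{first row} reading of the largest hook: equivalently, by Straub's bijective framework, a distinct-part partition with perimeter $n$ is built from a binary-type word of length $n-1$ (this is what produces the Fibonacci count $F_n$), and the number of parts $\ell(\pi)$ is read off as the number of ``down-steps'' or a similar lattice-path statistic along that word. Concretely I would aim to show
\begin{align}\label{gf-plan}
\sum_{n\geq 1}e(n)q^n = \frac{-q-q^2}{1+q+q^2} \quad\text{(or an equivalent form)},
\end{align}
by re-deriving the recurrence $F_n=F_{n-1}+F_{n-2}$ in a sign-sensitive way: removing the largest part, or removing the first column, from the Young diagram of a distinct-part partition changes $\ell(\pi)$ in a controlled manner, giving a sign-twisted recurrence $e(n)=-e(n-1)-e(n-2)$ (or $e(n)=-(e(n-1)+e(n-2))$) with small initial values, whose solution is exactly the period-$6$ sequence $0,-1,-1,0,1,1,0,-1,-1,\dots$. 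Verifying the base cases $n=1,2,3$ directly from the four sets in the Definition, together with this recurrence, then finishes the proof.

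The main obstacle I anticipate is pinning down the correct sign-twisted recurrence, i.e., understanding precisely how $\ell(\pi)\bmod 2$ transforms under the natural ``peel off the largest hook'' or ``delete the first part'' maps that realize the Fibonacci recursion for $h_{\D}(n)$. The subtlety is that the two recursive moves (delete the largest part $\pi_1$; versus decrease all parts and possibly delete a trailing $1$) affect $\ell(\pi)$ differently — one changes it by $1$, the other by $0$ or $1$ — so I must set up the bijection carefully, perhaps by conditioning on whether $\pi$ contains a part equal to $1$ and whether $\pi_1=\ell(\pi)$, to get a clean sign bookkeeping. An alternative route, which I would pursue in parallel as a safety net, is purely generating-functional: express $\sum_n h_{\D,\E}(n)q^n$ and $\sum_n h_{\D,\O}(n)q^n$ via the known product/sum expansion behind Theorem~\ref{ana-od} with an extra variable $t$ marking $\ell(\pi)$, then specialize $t=-1$; the resulting series should visibly simplify to a ratio with denominator $1-q+q^2$ or $1+q+q^2$, making the period-$6$ behavior transparent via partial fractions with sixth roots of unity. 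Either way, once the generating function in the shape of~\eqref{gf-plan} is established, extracting the stated closed form for $e(n)$ is a routine check of the first six coefficients.
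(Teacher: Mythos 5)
Your overall strategy is exactly the paper's: the authors give two proofs, one by specializing the two-variable generating function $H_{\D}(x,y,q)=xyq/(1-xq-xyq^2)$ at $x=1$, $y=-1$ and doing partial fractions over sixth roots of unity, and one by subtracting the pair of recurrences $h_{\D,\E}(n)=h_{\D,\E}(n-1)+h_{\D,\O}(n-2)$ and $h_{\D,\O}(n)=h_{\D,\O}(n-1)+h_{\D,\E}(n-2)$. However, the concrete formulas you commit to are both wrong, and not merely by a removable sign convention. The recurrence $e(n)=-e(n-1)-e(n-2)$ has characteristic polynomial $x^2+x+1$, whose roots are primitive \emph{cube} roots of unity, so every solution has period $3$; it cannot produce the period-$6$ pattern in the statement. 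Your candidate generating function fails the same way:
\begin{align*}
\frac{-q-q^2}{1+q+q^2}=\frac{-q(1-q^2)}{1-q^3}=-q+q^3-q^4+q^6-q^7+\cdots,
\end{align*}
which would give $e(2)=0$ and $e(3)=1$, whereas directly $\H_{\D}(2)=\{(2)\}$ gives $e(2)=-1$ and $\H_{\D}(3)=\{(3),(2,1)\}$ gives $e(3)=0$. So if you carried out the plan as written you would derive a false identity and have to backtrack; the "main obstacle" you flag (the sign bookkeeping) is precisely where the error sits.

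The correct bookkeeping is asymmetric between the two Fibonacci moves: only \emph{one} of them flips the parity of $\ell(\pi)$. Condition on the smallest part $\pi_r$ of $\pi\in\H_{\D,\E}(n)$. If $\pi_r>1$, subtracting $1$ from every part leaves $\ell$ unchanged and drops $\Gamma$ by $1$; if $\pi_r=1$, subtracting $1$ from every part deletes that part, so $\ell$ drops by $1$ (parity flips) and $\Gamma$ drops by $2$. This yields $h_{\D,\E}(n)=h_{\D,\E}(n-1)+h_{\D,\O}(n-2)$ and its companion, hence $e(n)=e(n-1)-e(n-2)$ --- the sign attaches only to the two-step term --- which iterates to $e(n)=-e(n-3)$, giving period $6$; equivalently $\sum_{n\geq1}e(n)q^n=-q/(1-q+q^2)$, whose denominator has the primitive \emph{sixth} roots of unity as reciprocal roots. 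With that correction, either of your two routes closes the proof after checking $e(1)=e(2)=-1$, $e(3)=0$. A minor further point: your opening paragraph about counting pairs $(a,b)$ is off --- for fixed $\pi_1=a$, $\ell(\pi)=b$ with $a+b=n+1$ there are $\binom{a-1}{b-1}$ distinct-part partitions, not one (this is the content of Theorem~\ref{ref1}); but that passage is not needed for the argument.
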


The rest of the paper is organized as follows. In section~\ref{sec:ana-od} we study function $h(n)$ and its various relatives from scratch, deduce their generating functions combinatorially. This eventually leads to three refinements of Theorem~\ref{ana-od} as well as two generalizations. We continue to consider a natural analogue of Euler's pentagonal number theorem in section~\ref{sec:ana-pent} and establish Theorem~\ref{ana-pent}. Then Subbarao's crucial observation \cite{Sub} on Franklin's involution leads us to identity \eqref{id: Andrews} that encompasses both the pentagonal number theorem and Theorem~\ref{ana-pent}. In the last section, some congruence properties are obtained for $h_{\D}(n)$ and we conclude with some remarks.

%%%%%%%%%%%%%%%%%%%%%%%%%%%%%%%
\section{Straub's analogue of Euler's theorem}\label{sec:ana-od}
%%%%%%%%%%%%%%%%%%%%%%%%%%%%%%%

\subsection{Refinement of Straub's analogue}
%It is our believe that some results in this subsection probably have been derived in disguised form in the literature before, but we nonetheless include them here for completeness.

We use $\P$ (resp. $\D$, $\O$) to denote the set of non-empty partitions with parts being unrestricted (resp. distinct, odd).
\begin{Def}
Let $H(q):=\sum_{\pi\in\P}q^{\Gamma(\pi)}=\sum_{n=1}^{\infty}h(n)q^n$ be the generating function for $h(n)$. Similarly we define $H_{\D}(q)$ (resp. $H_{\O}(q)$) as the generating function for $h_{\D}(n)$ (resp. $h_{\O}(n)$). And we shall also consider the generating functions for the following refinements.
\begin{itemize}
\item $H(x,y,q):=\sum\limits_{\pi\in\P}x^{\pi_1}y^{\ell(\pi)}q^{\Gamma(\pi)}=\sum\limits_{n=1}^{\infty}\sum\limits_{m=1}^{n}h(m,n)x^my^{n+1-m}q^n$;
\item $H_{\D}(x,y,q):=\sum\limits_{\pi\in\D}x^{\pi_1}y^{\ell(\pi)}q^{\Gamma(\pi)}=\sum\limits_{n=1}^{\infty}\sum\limits_{m=1}^{n}h_{\D}(m,n)x^my^{n+1-m}q^n$;
\item $H_{\O}(x,y,q):=\sum\limits_{\pi\in\O}x^{\pi_1}y^{\ell(\pi)}q^{\Gamma(\pi)}=\sum\limits_{n=1}^{\infty}\sum\limits_{m=1}^{n}h_{\O}(m,n)x^my^{n+1-m}q^n$.
\end{itemize}
\end{Def}
\begin{remark}
Note that the relationship between the exponents of $x,y$ and $q$ is a direct application of \eqref{Gamma}. And clearly $H(1,1,q)=H(q)$, $H_{\D}(1,1,q)=H_{\D}(q)$ and $H_{\O}(1,1,q)=H_{\O}(q)$, with $h(n)=\sum_{m=1}^{n}h(m,n)$, $h_{\D}(n)=\sum_{m=1}^{n}h_{\D}(m,n)$ and $h_{\O}(n)=\sum_{m=1}^{n}h_{\O}(m,n)$.
\end{remark}
% Again, by convention we take the empty partition as the only partition with its largest hook of length $0$, so $h(0)=1$.
Now we proceed to derive the main results of this subsection, namely the closed form for $H(x,y,q)$, $H_{\D}(x,y,q)$ and $H_{\O}(x,y,q)$, by analysing the so-called \emph{profile} of a given partition. Following \cite{KN}, for a given partition, we use profile to denote the set of southmost and eastmost edges of the boxes in its Young diagram, as depicted in Figure~\ref{profile}. And when we label the horizontal (resp. vertical) edges in the profile with $E$ (resp. $N$), we see that the profile of any non-empty partition in $\H(n)$ is in bijection with a word consists of two letters $E$ and $N$, whose length is $n+1$ and always begins with $E$, ends with $N$. This observation essentially serves as a quick bijective proof of the fact that $h(n)=2^{n-1}$, see Corollary~\ref{h(n)}. And it sets us up for our main theorem.

%%%%%%%%%%Fig 2%%%%%%%%%%%%%%%%%%%%%%%%%
\begin{figure}
\begin{tikzpicture}[scale=1]
% grid
\draw (0,0) grid (1,3);
\draw (1,1) grid (2,3);
% lable
\draw (0.5,-0.3) node{$E$};
\draw (1.3,0.3) node{$N$};
\draw (1.6,0.8) node{$E$};
\draw (2.3,1.5) node{$N$};
\draw (2.3,2.5) node{$N$};
\draw (4,1.2) node{$\Longleftrightarrow$};
\draw (6,1.2) node{$ENENN$};
\end{tikzpicture}
\caption{The profile of $\pi=(2,2,1)$ labelled with $E$ and $N$}
\label{profile}
\end{figure}
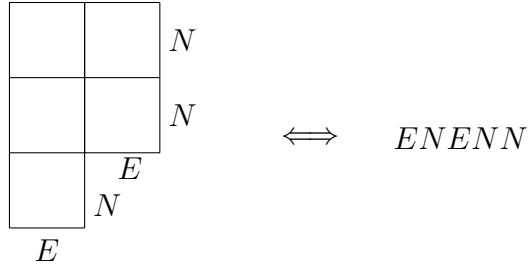

\begin{theorem}\label{gfxyq}
\begin{align}
H(x,y,q) &= xyq(1+(xq+yq)+(xq+yq)^2+\cdots)=\dfrac{xyq}{1-(xq+yq)},\label{gfH}\\
H_{\D}(x,y,q) &= xyq(1+(xq+xyq^2)+(xq+xyq^2)^2+\cdots)=\dfrac{xyq}{1-(xq+xyq^2)},\label{gfHD}\\
H_{\O}(x,y,q) &= xyq(1+(yq+x^2q^2)+(yq+x^2q^2)^2+\cdots)=\dfrac{xyq}{1-(yq+x^2q^2)}\label{gfHO}.
\end{align}
\end{theorem}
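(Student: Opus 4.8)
The plan is to exploit the profile encoding described just above the theorem: a non-empty partition $\pi\in\P$ corresponds bijectively to a word $w$ in the letters $E,N$ of length $\Gamma(\pi)+1$ that starts with $E$ and ends with $N$. The key numerical translation I would record first is that, reading the profile from the bottom-left corner of the first column up to the top-right corner of the first row, the number of $E$'s equals $\pi_1$ and the number of $N$'s equals $\ell(\pi)$, so that $x^{\pi_1}y^{\ell(\pi)}q^{\Gamma(\pi)}$ becomes $x^{(\#E)}y^{(\#N)}q^{(\#E)+(\#N)-1}$ on a word with $\#E,\#N\ge 1$. Thus, up to the overall factor $x y q^{-1}$, each letter $E$ contributes a weight $xq$ and each letter $N$ contributes a weight $yq$, and $H(x,y,q)$ is $xyq^{-1}$ times the sum of these weights over all words beginning with $E$ and ending with $N$. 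Peeling off the mandatory first $E$ (weight $xq$) and last $N$ (weight $yq$) leaves an arbitrary — possibly empty — word in $\{E,N\}$, whose weight-sum is the geometric series $\sum_{k\ge0}(xq+yq)^k=\tfrac{1}{1-(xq+yq)}$; multiplying the pieces gives \eqref{gfH}.

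For \eqref{gfHD} and \eqref{gfHO} I would run the same argument but identify which profile words correspond to the restricted classes. A partition has \emph{distinct parts} exactly when, reading the profile from top to bottom along the staircase, every ``inner corner'' is strictly indented, which in word terms means no two consecutive $N$'s appear except possibly at the very end — more precisely, after stripping the initial $E$ and terminal $N$, the admissible middle words are exactly those built from the blocks $E$ and $EN$ (each $N$ must be immediately preceded by an $E$ unless it is the final letter). This yields $\sum_{k\ge0}(xq+xyq^2)^k$ for the middle part, since a solo $E$ has weight $xq$ and the block $EN$ has weight $(xq)(yq)=xyq^2$, giving \eqref{gfHD}. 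Dually, a partition has \emph{odd parts} exactly when every part $\pi_i$ is odd, which translates to the profile decomposing (after the initial $E$) into the final $N$ preceded by blocks of the form $N$ (a single row-step corresponding to the "+1" of an odd number) and $EE$ (each pair of horizontal steps), i.e. the middle word is built from $N$ (weight $yq$) and $EE$ (weight $x^2q^2$); this gives $\sum_{k\ge0}(yq+x^2q^2)^k$ and hence \eqref{gfHO}. In each case the closed form follows by summing the geometric series.

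The main obstacle I anticipate is pinning down the block decompositions correctly, i.e. making the combinatorial claim ``$\pi$ has distinct parts $\iff$ its profile word avoids the pattern $NN$ except as a terminal suffix'' (and the analogous statement for odd parts) completely precise and proving it. The cleanest route is to track, for each $i$, the horizontal run of $E$'s lying between the $i$-th and $(i+1)$-st $N$ (counting from the bottom): this run has length $\pi_i-\pi_{i+1}$ for the distinct-parts encoding, so distinctness is equivalent to every such run having length $\ge1$, i.e. no $NN$ factor away from the end; and for the odd-parts encoding one instead groups the steps of each row into a leading $N$ together with $(\pi_i-1)/2$ copies of $EE$, which is possible precisely when each $\pi_i$ is odd. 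Once these equivalences are established, the rest is the routine geometric-series bookkeeping already sketched, and one should check the base/degenerate cases (the single-box partition $\pi=(1)$, whose profile is $EN$, contributing the leading $xyq$ in every formula) to confirm the constant terms match.
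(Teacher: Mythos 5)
Your overall strategy coincides with the paper's: encode each partition by its profile word, observe that the forced initial $E$ and terminal $N$ contribute the factor $xyq$, and sum a geometric series over the admissible middle blocks. Your treatment of \eqref{gfH} is correct, and for \eqref{gfHO} you arrive at the correct block alphabet $\{N,EE\}$ for the middle word, although the ``per-row'' grouping you sketch is not quite right: reading the word from southwest to northeast, the successive $E$-runs record the smallest part $\pi_r$ and then the differences of consecutive parts, not the parts themselves (try $(3,3)$, whose word is $EEENN$). The correct statement is that all parts are odd iff the first run is odd and all later runs are even, which after absorbing the initial $E$ gives exactly the blocks $N$ and $EE$.

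The genuine problem is in your characterization for \eqref{gfHD}. The $E$-run immediately preceding the \emph{terminal} $N$ has length $\pi_1-\pi_2$, so distinctness forbids a factor $NN$ everywhere, including at the very end; there is no exemption for a terminal suffix. Consequently the admissible middle words are exactly the words with no $NN$ factor that do not end in $N$, i.e.\ the concatenations of the blocks $E$ and $NE$ (each middle $N$ immediately \emph{followed} by an $E$), as in the paper --- not your blocks $E$ and $EN$. Under your scheme the non-distinct partition $(2,2)$, whose word is $EENN$ with middle $EN$, would be accepted, while the distinct partition $(2,1)$, whose word is $ENEN$ with middle $NE$, would be rejected. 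Formula \eqref{gfHD} still comes out right only because $EN$ and $NE$ carry the same weight $xyq^2$ (the two languages are exchanged by word reversal), but the equivalence you propose to prove, ``distinct parts iff the profile word avoids $NN$ except as a terminal suffix,'' is false, so the step you yourself single out as the crux would fail as planned. Replacing $EN$ by $NE$ (or arguing via reversal) repairs the argument and brings it in line with the paper's proof.
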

\begin{proof}
We show \eqref{gfH} first, and explain the changes need to be made for deriving \eqref{gfHD} and \eqref{gfHO}. Given any partition in $\P$, consider the word corresponding to its profile. As noted earlier, this word must begin with $E$ and end with $N$, together they contribute the factor $xyq$ (apply \eqref{Gamma} to see why it is $xyq$, not $xyq^2$). And all the remaining letters in between could be either an $E$ that contributes $xq$, or an $N$ that contributes $yq$, this explains the factor $(1+(xq+yq)+(xq+yq)^2+\cdots)$ and thus establishes \eqref{gfH}. Next for partitions in $\D$, the same restriction applies to the first and the last letters, so we also have the factor $xyq$. But now for the letters in the middle, ``having distinct parts'' translates to ``having no consecutive $N$''. So we could have either an $E$ still contributing $xq$, or a pair $NE$ contributing $xyq^2$. This justifies the factor $(1+(xq+xyq^2)+(xq+xyq^2)^2+\cdots)$ and proves \eqref{gfHD}. Finally for partitions in $\O$, all the parts are odd, which forces the corresponding word to have an even number of $E$s between two consecutive $N$, and an even number of $E$s after the first $E$. This explains the factor $(1+(x^2q^2+yq)+(x^2q^2+yq)^2+\cdots)$ and completes the proof.
\end{proof}
The next two corollaries follow immediately from Theorem~\ref{gfxyq}.
\begin{corollary}\label{h(n)}
For positive integer $n$, $h(n)=2^{n-1}$.
\end{corollary}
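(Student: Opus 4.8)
The plan is simply to specialize the closed form already in hand. Setting $x=y=1$ in \eqref{gfH} collapses $H(x,y,q)$ to $H(q)=\dfrac{q}{1-2q}$, and expanding the right-hand side as a geometric series gives $q\sum_{k\ge 0}(2q)^k=\sum_{n\ge 1}2^{n-1}q^n$. Matching the coefficient of $q^n$ against $H(q)=\sum_{n\ge 1}h(n)q^n$ then yields $h(n)=2^{n-1}$, which is the assertion.

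For a reader who prefers a purely combinatorial argument, one can instead invoke the profile description recalled just before Theorem~\ref{gfxyq}: a non-empty partition $\pi\in\H(n)$ is encoded uniquely by its profile word, a word over $\{E,N\}$ of length $n+1$ that must begin with $E$ and must end with $N$; the remaining $n-1$ letters may be chosen freely, producing $2^{n-1}$ words and hence $2^{n-1}$ such partitions. (Equivalently, one is counting the monomials of $q$-degree $n$ in $xyq\sum_{k\ge 0}(xq+yq)^k$ after the substitution $x=y=1$.)

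There is no genuine obstacle here; the only point deserving a word of care is the off-by-one bookkeeping governed by \eqref{Gamma}, namely that a largest hook of length $n$ corresponds to a profile word of length $n+1$ — equivalently, the leading factor $xyq$ rather than $xyq^2$ in Theorem~\ref{gfxyq} — together with the standing convention that the empty partition is excluded, so that $H(q)$ has no constant term and the summation begins at $n=1$.
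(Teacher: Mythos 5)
Your proposal is correct and matches the paper's own proof, which likewise sets $x=y=1$ in \eqref{gfH} to obtain $H(q)=q/(1-2q)=\sum_{n\geq 1}2^{n-1}q^n$. The supplementary combinatorial count via profile words is also the one the paper itself sketches just before Theorem~\ref{gfxyq}, so nothing here departs from the authors' argument.
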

\begin{proof}
Put $x=y=1$ in \eqref{gfH} to get $H(q)=q/(1-2q)=\sum_{n=1}^{\infty}2^{n-1}q^n$.
\end{proof}
\begin{corollary}
Theorem~\ref{ana-od} is true.
\end{corollary}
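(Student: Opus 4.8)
The goal is to prove Theorem~\ref{ana-od}, which asserts that for every $M$ the number of partitions into distinct parts with largest hook length $M$ equals the number of partitions into odd parts with the same largest hook length, both equal to the Fibonacci number $F_M$.

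The plan is to read everything off the generating functions already computed in Theorem~\ref{gfxyq}. First I would set $x=y=1$ in \eqref{gfHD} and \eqref{gfHO}: this gives $H_{\D}(q)=\dfrac{q}{1-q-q^2}$ and $H_{\O}(q)=\dfrac{q}{1-q-q^2}$, so the two generating functions are literally identical, which instantly yields $h_{\D}(n)=h_{\O}(n)$ for all $n$. Second, I would recognize $\dfrac{q}{1-q-q^2}$ as the classical generating function for the Fibonacci numbers (with the normalization $F_1=F_2=1$), either by quoting the standard fact or by checking that the coefficients $c_n$ defined by this rational function satisfy $c_1=1$, $c_2=1$, and $c_n=c_{n-1}+c_{n-2}$ for $n\geq 3$ after clearing denominators. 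That identifies both counts with $F_M$ and completes the proof.

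Concretely, one line suffices once Theorem~\ref{gfxyq} is in hand: substituting $x=y=1$ collapses $xq+xyq^2$ and $yq+x^2q^2$ to the same quantity $q+q^2$, so $H_{\D}(q)=H_{\O}(q)=q/(1-q-q^2)=\sum_{M\geq 1}F_M q^M$. I would spell this out as the proof, perhaps noting in parentheses why the combinatorial descriptions of the two families ("no two consecutive $N$'s" versus "even blocks of $E$'s") become equinumerous only after forgetting the refined statistics $\pi_1$ and $\ell(\pi)$ — indeed at the refined level $H_{\D}(x,y,q)\ne H_{\O}(x,y,q)$ in general, so the collapse of the two denominators under $x=y=1$ is exactly the content of the theorem.

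There is essentially no obstacle here: the heavy lifting was done in the bijective derivation of Theorem~\ref{gfxyq} via profiles. The only thing to be a little careful about is the Fibonacci normalization — I would state explicitly which indexing convention for $F_M$ is being used (so that $F_1=F_2=1$ and the series starts $q+q^2+2q^3+3q^4+5q^5+\cdots$), to match Straub's statement of Theorem~\ref{ana-od} and avoid an off-by-one discrepancy. With that convention fixed, the corollary is immediate from Theorem~\ref{gfxyq}.
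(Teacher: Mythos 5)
Your proof is correct and is essentially identical to the paper's: both set $x=y=1$ in \eqref{gfHD} and \eqref{gfHO}, observe that the two generating functions collapse to $q/(1-q-q^2)$, and identify this as the Fibonacci generating function. Your extra remarks on normalization and on the refined generating functions differing are sensible but not needed.
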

\begin{proof}
Put $x=y=1$ in both \eqref{gfHD} and \eqref{gfHO} to get $H_{\D}(q)=H_{\O}(q)=q/(1-q-q^2)$, which is well-known as the generating function for the Fibonacci numbers.
\end{proof}
\begin{remark}
Euler's original proof of Theorem~\ref{od} used generating function. Interested readers are referred to \cite[Chapter~2,5]{AE} for a gentle introduction and \cite{Andr} for more serious discussions. Straub \cite{Str} gave two proofs of his analogue, but neither of them used generating function. So our Theorem~\ref{gfxyq} together with the above corollary can be viewed as the first generating function proof of Theorem~\ref{ana-od}.
\end{remark}

In his paper, Straub recalled two refinements \cite[Example 1.7, 1.8]{Str} of Euler's theorem due to Fine \cite{Fine}, and raised the question of seeking similar refinements for his analogue. We supply three candidates here, each of them is a direct result of evaluating $x,y$ appropriately in \eqref{gfHD} and \eqref{gfHO}.

\begin{theorem}\label{ref1}
For integers $n\geq 1$, $1\leq k \leq \lceil n/2\rceil$. The number of partitions $\pi$ into exactly $k$ parts, all distinct, and with $\Gamma(\pi)=n$ is equal to the number of partitions $\lambda$ into odd parts with $\lambda_1=2k-1$ and $\Gamma(\lambda)=n$. Both are enumerated by $\binom{n-k}{k-1}$.
\end{theorem}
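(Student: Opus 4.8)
The plan is to extract Theorem~\ref{ref1} directly from the closed forms in Theorem~\ref{gfxyq}, since the statement only involves counting partitions by number of parts (tracked by $y$, or rather by $\pi_1$ versus $\ell$) together with $\Gamma(\pi)$. The key observation is that in $H_{\D}(x,y,q)$ the variable $y$ records $\ell(\pi)$, so a partition into exactly $k$ distinct parts is picked out by the coefficient of $y^k$; whereas in $H_{\O}(x,y,q)$ the variable $x$ records $\pi_1$, so partitions into odd parts with $\lambda_1 = 2k-1$ are picked out by the coefficient of $x^{2k-1}$. Thus the asserted equinumeracy should amount to the single algebraic identity
\begin{align}\label{eq:plan-ref1}
[y^k]\,H_{\D}(1,y,q) = [x^{2k-1}]\,H_{\O}(x,1,q),
\end{align}
and both sides should equal $\sum_n \binom{n-k}{k-1} q^n$.

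First I would compute the left-hand side. Setting $x=1$ in \eqref{gfHD} gives $H_{\D}(1,y,q) = yq/(1 - q - yq^2) = yq\sum_{j\ge 0}(q + yq^2)^j$. Expanding $(q+yq^2)^j = \sum_{i} \binom{j}{i} q^{j-i}(yq^2)^i$, the power of $y$ is $i+1$ after accounting for the leading $yq$, so to get $y^k$ I need $i = k-1$; the corresponding $q$-power is $1 + (j-i) + 2i = j + i + 1 = j + k$, with coefficient $\binom{j}{k-1}$. Writing $n = j+k$, i.e. $j = n-k$, this contributes $\binom{n-k}{k-1} q^n$, exactly as claimed, and the constraint $j \ge i = k-1$ translates to $n \ge 2k-1$, i.e. $k \le \lceil n/2\rceil$. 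Next I would compute the right-hand side. Setting $y=1$ in \eqref{gfHO} gives $H_{\O}(x,1,q) = xq/(1 - q - x^2q^2) = xq\sum_{j\ge 0}(q + x^2q^2)^j$, and the same binomial expansion yields $x$-power $1 + 2i$ and $q$-power $1 + (j-i) + 2i = j+i+1$ from the term with $\binom{j}{i}$. To get $x^{2k-1}$ I need $i = k-1$ again, giving $q$-power $j + k$ and coefficient $\binom{j}{k-1}$; substituting $n = j+k$ reproduces $\binom{n-k}{k-1}q^n$ with the identical range. This establishes \eqref{eq:plan-ref1} and the common enumerator.

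Since the computation is a routine geometric-series-and-binomial-theorem manipulation, there is no serious obstacle; the only point that requires a little care is the bookkeeping of which variable tracks which statistic and the resulting translation of the index range $j \ge k-1$ into the stated bound $1 \le k \le \lceil n/2 \rceil$ (note $\lceil n/2\rceil$ is the largest $k$ with $2k-1 \le n$). As an optional addendum I would include the short bijective remark that underlies the identity: a partition into $k$ distinct parts has profile word with exactly $k$ letters $N$, and conjugation (transposing the Young diagram, which fixes $\Gamma$ and swaps $E \leftrightarrow N$) sends $\H_{\D,\,\ell=k}(n)$ to the set of partitions whose profile has no two consecutive $E$'s and largest part... — but in fact the cleaner conceptual statement is simply that both families biject with lattice paths / binary words of the appropriate type counted by $\binom{n-k}{k-1}$, so I would likely just present the generating function proof and mention the bijective interpretation in one sentence.
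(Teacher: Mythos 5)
Your proposal is correct and follows essentially the same route as the paper: specialize $x=1$ in \eqref{gfHD} and $y=1$ in \eqref{gfHO}, observe both become instances of $aq/(1-(q+bq^2))$, and extract the coefficient of $ab^{k-1}q^n$ via geometric series and the binomial theorem to obtain $\binom{n-k}{k-1}$. The only difference is presentational: you carry out the double expansion explicitly and track the index range, whereas the paper counts the choices among the $n-k$ copies of the factor $q+bq^2$ directly.
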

\begin{proof}
Take $x=1$ in \eqref{gfHD} and $y=1$ in \eqref{gfHO}, then we see $H_{\D}(1,y,q)=\dfrac{yq}{1-(q+yq^2)}$ and $H_{\O}(x,1,q)=\dfrac{xq}{1-(q+x^2q^2)}$, both are specializations of $\dfrac{aq}{1-(q+bq^2)}$. Now we simply compare the coefficients of $ab^{k-1}q^n$ from both cases to see they are equal as claimed. Next for the exact enumeration, note that to resemble a term $ab^{k-1}q^n$, besides the numerator $aq$, we need to extract from $(k-1)+(n-1-2(k-1))=n-k$ copies of the factor $q+bq^2$, wherein $k-1$ copies we must choose $bq^2$ to guarantee $b^{k-1}$, and the remaining $n-2k+1$ copies all choose $q$ to make the total power of $q$ exactly $n$, hence the count $\binom{n-k}{k-1}$.
\end{proof}
\begin{example}
Table~\ref{tab1} lists all partitions $\pi$ into distinct parts with $\Gamma(\pi)=9, \ell(\pi)=4$, and the odd partitions $\lambda$ they get matched with such that $\Gamma(\lambda)=9, \lambda_{1}=7$. Both columns contain $\binom{9-4}{4-1}=\binom{5}{3}=10$ partitions in total.
\end{example}

%%%%%%%%%%Tab 1%%%%%%%%%%%%%%%%%%%%%%%%%
\begin{table}[htbp]
\centering \caption{}
\begin{tabular}{|c|c|}
\hline
distinct partitions & odd partitions\\
\hline
(6,5,4,3) &(7,7,7)\\
\hline
(6,5,4,2) &(7,7,5)\\
\hline
(6,5,4,1) &(7,7,3)\\
\hline
(6,5,3,2) &(7,7,1)\\
\hline
(6,5,3,1) &(7,5,5)\\
\hline
(6,5,2,1) &(7,5,3)\\
\hline
(6,4,3,2) &(7,5,1)\\
\hline
(6,4,3,1) &(7,3,3)\\
\hline
(6,4,2,1) &(7,3,1)\\
\hline
(6,3,2,1) &(7,1,1)\\
\hline
\end{tabular}
\label{tab1}
\end{table}
\begin{theorem}
For integers $n\geq 1$, $\lceil (n+1)/2\rceil\leq k \leq n$. The number of partitions $\pi$ into distinct parts, with $\pi_1=k$ and $\Gamma(\pi)=n$ is equal to the number of partitions $\lambda$ into odd parts with $\lambda_1/2+\ell(\lambda)=k+1/2$ and $\Gamma(\lambda)=n$. Both are enumerated by $\binom{k-1}{n-k}$.
\end{theorem}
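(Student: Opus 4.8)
The plan is to mimic exactly the proof of Theorem~\ref{ref1}, simply reading off a different monomial from the same pair of generating functions. Recall from Theorem~\ref{gfxyq} that
\[
H_{\D}(x,y,q)=\frac{xyq}{1-(xq+xyq^2)},\qquad H_{\O}(x,y,q)=\frac{xyq}{1-(yq+x^2q^2)}.
\]
The statistic $\pi_1$ on distinct partitions is tracked by $x$, so I want the coefficient of $x^k q^n$ in $H_{\D}(x,1,q)=\dfrac{xq}{1-(xq+xq^2)}$. On the odd side, the condition $\lambda_1/2+\ell(\lambda)=k+1/2$ is precisely the statement that the monomial $x^{\lambda_1}y^{\ell(\lambda)}$ in $H_{\O}$ contributes, after the substitution $x\mapsto t^{1/2}$, $y\mapsto t$, a power $t^{\lambda_1/2+\ell(\lambda)}=t^{k+1/2}$; equivalently, set $x=\sqrt{t}$, $y=t$ and extract the coefficient of $t^{k+1/2}q^n$ in $H_{\O}(\sqrt t,t,q)=\dfrac{t^{3/2}q}{1-(tq+tq^2)}$. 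Dividing numerator and denominator appropriately, both specializations are instances of the single rational function $\dfrac{aq}{1-(aq+aq^2)}=\dfrac{aq}{1-aq(1+q)}$: on the distinct side with $a=x$ and target monomial $a^kq^n$, and on the odd side with $a=t$ and target monomial $a^kq^n$ as well (the half-integer bookkeeping on the odd side is absorbed once, coming from the leading factor $xyq=t^{3/2}q$ versus the distinct leading factor $xyq=xq$, which is why the index $k$ matches rather than $k$ versus $k+1/2$). So the first step is to set up these two substitutions carefully and confirm that in both cases we are extracting $[a^kq^n]\dfrac{aq}{1-aq(1+q)}$.

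The second step is the exact enumeration, which is the same binomial computation as in Theorem~\ref{ref1}. Expanding $\dfrac{aq}{1-aq(1+q)}=\sum_{j\ge 0}a^{j+1}q^{j+1}(1+q)^j$, the coefficient of $a^kq^n$ comes only from the term $j=k-1$, giving $q^k(1+q)^{k-1}$, and then $[q^n]q^k(1+q)^{k-1}=[q^{n-k}](1+q)^{k-1}=\binom{k-1}{n-k}$. This also explains the stated range $\lceil(n+1)/2\rceil\le k\le n$: we need $0\le n-k\le k-1$, i.e. $k\le n$ and $n-k\le k-1$, the latter being $k\ge (n+1)/2$.

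The only genuine subtlety — and the step I'd expect a careful reader to want spelled out — is the half-integer exponent matching on the odd side, i.e. checking that the condition ``$\lambda_1/2+\ell(\lambda)=k+1/2$'' is the right translation of ``extract $a^k$'' and not off by a shift. The cleanest way to handle it is to write, for an odd partition $\lambda$ counted by $H_{\O}(x,y,q)$, the contribution $x^{\lambda_1}y^{\ell(\lambda)}q^{\Gamma(\lambda)}$, peel off the forced leading factor $xyq$ (coming from the initial $E$ and terminal $N$ of the profile word), and observe that the remaining middle part of the word is built from blocks $yq$ and $x^2q^2$; under $x\mapsto\sqrt t$ each block contributes an integer power of $t$, so the full exponent of $t$ is $\lambda_1/2+\ell(\lambda)$, and equating this to $k+1/2$ picks out exactly the partitions whose middle word, together with the leading $x$, matches $a^{k-1}$ after the substitution — i.e. the same $a^k q^n$-coefficient as on the distinct side. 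Once that identification is pinned down, comparing coefficients of $a^kq^n$ in the common rational function $\dfrac{aq}{1-aq(1+q)}$ finishes the proof, and a one-line example (e.g. $n=9$, some admissible $k$) can be appended in parallel with Table~\ref{tab1} if desired.
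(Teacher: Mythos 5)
Your proposal is correct and follows essentially the same route as the paper: set $y=1$ in \eqref{gfHD} and $(x,y)=(x^{1/2},x)$ in \eqref{gfHO}, observe that both specializations reduce (up to the half-integer shift you correctly account for) to $\dfrac{aq}{1-aq(1+q)}$, and read off $\binom{k-1}{n-k}$ by the same expansion used for Theorem~\ref{ref1}. In fact you spell out the coefficient extraction and the range condition $0\le n-k\le k-1$ more explicitly than the paper does, which simply says the exact count follows ``similarly.''
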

\begin{proof}
Take $y=1$ in \eqref{gfHD} and $x=x^{1/2},y=x$ in \eqref{gfHO}, then we see $H_{\D}(x,1,q)=\dfrac{xq}{1-(xq+xq^2)}$ and $H_{\O}(x^{1/2},x,q)=\dfrac{x^{3/2}q}{1-(xq+xq^2)}$, which implies the statement in the theorem immediately. And we can make the exact count similarly as in the proof of Theorem~\ref{ref1}.
\end{proof}
\begin{example}
Table~\ref{tab2} lists all partitions $\pi$ into distinct parts with $\Gamma(\pi)=8, \pi_{1}=6$, and the odd partitions $\lambda$ they get matched with such that $\Gamma(\lambda)=8, \lambda_1/2+\ell(\lambda)=13/2$. Both columns contain $\binom{6-1}{8-6}=\binom{5}{2}=10$ partitions in total.
\end{example}

%%%%%%%%%%Tab 2%%%%%%%%%%%%%%%%%%%%%%%%%
\begin{table}[htbp]\caption{}
\centering
\begin{tabular}{|c|c|}
\hline
distinct partitions & odd partitions\\
\hline
(6,5,4) &(5,5,5,5)\\
\hline
(6,5,3) &(5,5,5,3)\\
\hline
(6,5,2) &(5,5,5,1)\\
\hline
(6,5,1) &(5,5,3,3)\\
\hline
(6,4,3) &(5,5,3,1)\\
\hline
(6,4,2) &(5,5,1,1)\\
\hline
(6,4,1) &(5,3,3,3)\\
\hline
(6,3,2) &(5,3,3,1)\\
\hline
(6,3,1) &(5,3,1,1)\\
\hline
(6,2,1) &(5,1,1,1)\\
\hline
\end{tabular}
\label{tab2}
\end{table}

\begin{theorem}
For integers $n\geq 1$, $0\leq k \leq n-1$. The number of partitions $\pi$ into distinct parts, with $\rank(\pi)=k$ and $\Gamma(\pi)=n$ is equal to the number of partitions $\lambda$ into odd parts with $\ell(\lambda)=k+1$ and $\Gamma(\lambda)=n$, where $\rank(\pi):=\pi_1-\ell(\pi)$. Both are enumerated by $\binom{(n+k-1)/2}{k}$. Note that $n-1$ and $k$ always have the same parity due to their definition.
\end{theorem}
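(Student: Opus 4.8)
The plan is to follow the same strategy as in the proofs of the previous two theorems: specialize $x$ and $y$ in the closed forms \eqref{gfHD} and \eqref{gfHO} of Theorem~\ref{gfxyq} so that both generating functions collapse to essentially one and the same rational function of two variables, and then extract the coefficient.

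For the distinct-part side, introduce a fresh variable $z$ and put $x=z,\ y=z^{-1}$ in \eqref{gfHD}. Since $\rank(\pi)=\pi_1-\ell(\pi)$, the monomial $x^{\pi_1}y^{\ell(\pi)}$ becomes $z^{\rank(\pi)}$, so
\[
H_{\D}(z,z^{-1},q)=\frac{q}{1-(zq+q^2)}=\sum_{\pi\in\D}z^{\rank(\pi)}q^{\Gamma(\pi)}.
\]
Although this substitution a priori produces negative powers of $z$, for each fixed power of $q$ only finitely many powers of $z$ occur, so the right-hand side is a bona fide Laurent series (and, as the parity remark makes clear, only nonnegative powers of $z$ survive). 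For the odd-part side, put $x=1,\ y=z$ in \eqref{gfHO}, which gives
\[
H_{\O}(1,z,q)=\frac{zq}{1-(zq+q^2)}=\sum_{\lambda\in\O}z^{\ell(\lambda)}q^{\Gamma(\lambda)}.
\]
Comparing the two displays yields $H_{\O}(1,z,q)=z\,H_{\D}(z,z^{-1},q)$, hence the coefficient of $z^{k+1}q^n$ on the odd side equals the coefficient of $z^kq^n$ on the distinct side. This is precisely the equinumerosity claimed, once one matches the index shift $\ell(\lambda)=\rank(\pi)+1$.

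It remains to evaluate this common coefficient. Writing $\dfrac{q}{1-(zq+q^2)}=q\sum_{j\ge 0}(zq+q^2)^j$ and hunting for $z^kq^n$, one must select the summand $zq$ exactly $k$ times and $q^2$ exactly $j-k$ times among the $j$ factors of $(zq+q^2)^j$, contributing $q^{1+k+2(j-k)}=q^{2j-k+1}$; setting $2j-k+1=n$ forces $j=(n+k-1)/2$, and there are $\binom{(n+k-1)/2}{k}$ ways to make the selection. The parity note is explained by combining $\Gamma(\pi)=\pi_1+\ell(\pi)-1=n$ with $\rank(\pi)=\pi_1-\ell(\pi)$, which give $\rank(\pi)\equiv n-1\pmod 2$ and in particular make $(n+k-1)/2$ an integer; the stated range $0\le k\le n-1$ is exactly the range where this binomial coefficient is positive.

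As the key closed forms \eqref{gfHD} and \eqref{gfHO} are already available, there is no genuine obstacle; the only place that needs a bit of attention is the bookkeeping around the substitution $x=z,\ y=z^{-1}$ — checking that the resulting series is legitimate, and that the factor $z$ relating $H_{\O}(1,z,q)$ to $H_{\D}(z,z^{-1},q)$ indeed corresponds to the shift $\ell(\lambda)=\rank(\pi)+1$ rather than some other offset.
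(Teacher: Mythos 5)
Your proposal is correct and follows essentially the same route as the paper: the paper likewise sets $y=x^{-1}$ in \eqref{gfHD} and $x=1$ in \eqref{gfHO} to obtain $\frac{q}{1-(xq+q^2)}$ and $\frac{yq}{1-(yq+q^2)}$, reads off the equinumerosity with the index shift, and counts as in the earlier refinements. Your explicit coefficient extraction giving $\binom{(n+k-1)/2}{k}$ is exactly the computation the paper leaves to the reader.
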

\begin{proof}
Take $y=x^{-1}$ in \eqref{gfHD} and $x=1$ in \eqref{gfHO}, then we see $H_{\D}(x,x^{-1},q)=\dfrac{q}{1-(xq+q^2)}$ and $H_{\O}(1,y,q)=\dfrac{yq}{1-(yq+q^2)}$, which implies the statement in the theorem immediately. The exact count follows analogously as the previous two theorems.
\end{proof}
\begin{example}
Table~\ref{tab3} lists partitions $\pi$ into distinct parts with $\Gamma(\pi)=7, \rank(\pi)=2$, and the odd partitions $\lambda$ they get matched with such that $\Gamma(\lambda)=7, l(\lambda)=3$. Both columns contain $\binom{(7+2-1)/2}{2}=6$ partitions in total.
\end{example}

%%%%%%%%%%Tab 3%%%%%%%%%%%%%%%%%%%%%%%%%
\begin{table}[htbp]\caption{}
\centering
\begin{tabular}{|c|c|}
\hline
distinct partitions & odd partitions\\
\hline
(5,4,3) &(5,5,5)\\
\hline
(5,4,2) &(5,5,3)\\
\hline
(5,4,1) &(5,5,1)\\
\hline
(5,3,2) &(5,3,3)\\
\hline
(5,3,1) &(5,3,1)\\
\hline
(5,2,1) &(5,1,1)\\
\hline
\end{tabular}
\label{tab3}
\end{table}

\begin{remark}
All the above three refinements could be given direct bijective proofs as Straub did for Theorem~\ref{ana-od}. The idea is to correspond each $E$ step (weighted as $xq$) in distinct partition with each $N$ step (weighted as $yq$) in odd partition, while each $NE$ step (weighted as $xyq^2$) in distinct partition corresponds to each $EE$ step (weighted as $x^2q^2$) in odd partition, then keep track of how many steps of these types in them. We leave the details to interested readers.
\end{remark}

\subsection{Generalization to $d$-distinct partitions}
Following \cite{AE}, we call a partition $d$-distinct if the difference between any two of its parts is at least $d$. In 1956, Alder~\cite{Ald} investigated $q_d(n)$ and $Q_d(n)$, the number of partitions of $n$ into $d$-distinct parts and into parts $\equiv \pm 1\pmod {d+3}$, respectively, and he conjectured that
\begin{align}\label{AConj}
q_d(n)\geq Q_d(n).
\end{align}
See \cite[Chapter 4.3]{AE} for more information on this conjecture, and see \cite{AJO, Andr1, Yee} for its proof. Alder's pre-conjecture concerns us here because when $d=1,2$ we actually get ``equality'' instead of ``inequality'' in \eqref{AConj}. Indeed, the $d=1$ case is Euler's identity (Theorem~\ref{od}), and $d=2$ produces the first Rogers-Ramanujan identity. While $q_d(n)=Q_d(n)$ is too good to be true for all $d$, in our case however, we can generalize Straub's analogue (Theorem~\ref{ana-od}) to $d$-distinct partitions and get equalities for all $d$.
\begin{Def}
For positive integers $n$ and $d$, we denote $h_d(n)$ the number of $d$-distinct partitions $\pi$ with $\Gamma(\pi)=n$, and $f_d(n)$ the number of partitions $\lambda$ into parts $\equiv 1 \pmod {d+1}$ with $\Gamma(\lambda)=n$. Similarly, we denote $\H_d$ the set of all $d$-distinct partitions, and $\F_d$ the set of all partitions into parts $\equiv 1 \pmod{d+1}$.
\end{Def}
Note that with this new definition, we have $h_1(n)=h_{\D}(n), f_1(n)=h_{\O}(n)$ and $\H_1=\D,\F_1=\O$.
\begin{theorem}
For all $d\geq 1$, we have
\begin{align}
H_d(x,y,q) &:=\sum\limits_{\pi\in\H_d}x^{\pi_1}y^{\ell(\pi)}q^{\Gamma(\pi)}\label{gfHd}\\
&=xyq(1+(xq+x^dyq^{d+1})+(xq+x^dyq^{d+1})^2+\cdots)=\dfrac{xyq}{1-(xq+x^dyq^{d+1})},\nonumber \\
F_d(x,y,q) &:=\sum\limits_{\pi\in\F_d}x^{\pi_1}y^{\ell(\pi)}q^{\Gamma(\pi)}\label{gfMd}\\
&=xyq(1+(yq+x^{d+1}q^{d+1})+(yq+x^{d+1}q^{d+1})^2+\cdots)=\dfrac{xyq}{1-(yq+x^{d+1}q^{d+1})}.\nonumber
\end{align}
In particular, we have for all $n\geq 1$
\begin{align}\label{d-distinct-id}
h_d(n)=f_d(n).
\end{align}
\end{theorem}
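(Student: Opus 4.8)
The plan is to mimic the profile argument used in the proof of Theorem~\ref{gfxyq} (and of its $d=1$ specializations), now adapted to the $d$-distinct condition and the ``$\equiv 1 \pmod{d+1}$'' condition. As before, each non-empty partition $\pi$ is encoded by the word in $\{E,N\}$ obtained from its profile; this word has length $\Gamma(\pi)+1$, begins with $E$, ends with $N$, and under \eqref{Gamma} the boundary pair $EN$ contributes $xyq$ while each interior $E$ contributes $xq$ and each interior $N$ contributes $yq$. So the whole task is to translate the two arithmetic restrictions on parts into local restrictions on the interior letters of the profile word, then sum the resulting geometric series.

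First I would handle $\H_d$. The parts of $\pi$ read off the profile as the lengths of the maximal runs separated by the $N$-steps; ``$d$-distinct'' means consecutive parts differ by at least $d$, which translates to: between any two consecutive $N$'s there must be at least $d$ interior $E$'s, equivalently every $N$ that is not the final letter is immediately preceded by a block of the form $E^d$ that is ``used up'' by that $N$. Parsing the interior of the word left to right, the legal continuations are therefore either a single $E$ (weight $xq$) or a block $E^d N$ (weight $x^dyq^{d+1}$, using \eqref{Gamma}: $d$ horizontal steps and one vertical step, total $d+1$ boxes added). This yields the factor $1+(xq+x^dyq^{d+1})+(xq+x^dyq^{d+1})^2+\cdots$ and hence \eqref{gfHd}; note this reduces to \eqref{gfHD} when $d=1$.

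Next, for $\F_d$, parts $\equiv 1 \pmod{d+1}$ means every part is of the form $1+(d+1)j$; in terms of the profile, the run of $E$'s before the first $N$, and each run of $E$'s between consecutive $N$'s, must have length in $\{1, d+2, 2d+3, \ldots\}$, i.e. $\equiv 1 \pmod{d+1}$. Parsing left to right after the initial $E$, the legal continuations are either a block $E^{d+1}$ extending a part (weight $x^{d+1}q^{d+1}$) or a single $N$ closing off the current part (weight $yq$), giving the factor $1+(yq+x^{d+1}q^{d+1})+\cdots$ and hence \eqref{gfMd}; this reduces to \eqref{gfHO} when $d=1$. Finally, \eqref{d-distinct-id} follows by setting $x=y=1$ in both \eqref{gfHd} and \eqref{gfMd}: both generating functions collapse to $\dfrac{q}{1-q-q^{d+1}}$, so $h_d(n)=f_d(n)$ for every $n\ge 1$.

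The only genuinely delicate point is making the two parsings of the interior word unambiguous and exhaustive — i.e.\ checking that every $d$-distinct (resp.\ $\equiv 1$) profile word decomposes uniquely into the claimed blocks and that every such block sequence produces a valid partition of the stated type. For $\H_d$ one must be a little careful that the ``$E^d$ before each non-final $N$'' can always be allocated greedily without overlap; for $\F_d$ one must note the initial $E$ is the mandatory start of the first part and the remaining letters split cleanly into $E^{d+1}$ and $N$ tokens. Both are routine once phrased correctly, and I would only sketch them, referring back to the proof of Theorem~\ref{gfxyq} for the template.
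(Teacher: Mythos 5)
Your overall strategy (profile words, block decomposition, geometric series, then $x=y=1$) is exactly the paper's, but the translation of $d$-distinctness into the word language contains a genuine error. You claim that ``$d$-distinct'' is equivalent to ``every $N$ that is not the final letter is immediately preceded by $E^d$,'' and accordingly you parse the interior into tokens $E$ and $E^dN$. The correct equivalence is that every $N$ \emph{other than the first $N$ of the word} must be immediately preceded by at least $d$ copies of $E$ (equivalently, every non-final $N$ is immediately \emph{followed} by at least $d$ copies of $E$), and the correct token is $NE^d$, as in the paper and as in the $d=1$ case \eqref{gfHD}, where the token is $NE$. Your grammar $E\,(E\mid E^dN)^{*}\,N$ generates the wrong set of words: already for $d=1$ and $n=3$ it produces $E\cdot EN\cdot N=EENN$, which is the profile of the non-distinct partition $(2,2)$, and it cannot produce $ENEN$, the profile of the distinct partition $(2,1)$, since after the initial $E$ both of your token types begin with $E$. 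The ``greedy allocation'' worry you flag at the end is a symptom of this: the problem is not overlap but that the reformulated condition itself is false. The final formula survives only by accident, because $E^dN$ and $NE^d$ carry the same weight $x^dyq^{d+1}$, so your series coincides with the generating function of a different set of words having the same $(\#E,\#N)$-distribution; as written, though, the argument does not prove that this is the generating function of $\H_d$.

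There is also a smaller slip in the $\F_d$ half: a part $\equiv 1\pmod{d+1}$ forces the run of $E$'s \emph{before the first $N$} to have length $\equiv 1\pmod{d+1}$, but the runs of $E$'s \emph{between consecutive $N$'s} must have length $\equiv 0\pmod{d+1}$ (they record differences of consecutive parts), not $\equiv 1$ as you state. Fortunately the decomposition you then write down, $E\,(E^{d+1}\mid N)^{*}\,N$, enforces exactly the correct conditions and agrees with the paper's, so this half only needs its justification corrected. Fixing the $\H_d$ half by replacing the token $E^dN$ with $NE^d$ (and the misstated equivalence with the correct one) brings your proof in line with the paper's.
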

\begin{proof}
It will suffice to show \eqref{gfHd} and \eqref{gfMd}, since \eqref{d-distinct-id} can be easily deduced from them by taking $x=y=1$ and then extracting the coefficient of $q^n$ from both. The proof of \eqref{gfHd} and \eqref{gfMd} is analogous to that of \eqref{gfHD} and \eqref{gfHO}. Given $\pi\in\H_d$, the condition of being $d$-distinct leads to the restriction on $\pi$'s profile. Indeed, the first and the last letter of the corresponding word still produces $xyq$, while the middle letters could be either an $E$ contributing $xq$, or a single $N$ followed by $d$ copies of $E$, contributing $(yq)(xq)\cdots(xq)=x^dyq^{d+1}$, this gives us \eqref{gfHd}. Similarly, for $\F_d$, to keep the parts always $\equiv 1 \pmod{d+1}$, we must have either an $N$ contributing $yq$, or consecutive $d+1$ copies of $E$ contributing $x^{d+1}q^{d+1}$, which explains the change in \eqref{gfMd} and completes the proof.
\end{proof}

When $d=2$, $2$-distinct partitions are exactly those generated by the series side of the first Rogers-Ramanujan identity, but partitions into parts $\equiv 1 \pmod 3$ (or equivalently $\equiv 1,4 \pmod 6$) do not agree with the product side of the first Rogers-Ramanujan identity, which generates partitions into parts $\equiv 1,4 \pmod 5$. Actually, a quick look at Table~\ref{tab4} reveals that with $\Gamma(\pi)\leq 5$, there are as many partitions into parts $\equiv 1,4 \pmod 6$ as there are into parts $\equiv 1,4 \pmod 5$. But when $\Gamma(\pi)\geq 6$, the former set of partitions is smaller. For instance, partition $(6)$ belongs to the latter but not to the former. This observation suggests that if one wants to force modular $5$ instead of modular $6$, some further restrictions need to be added, this is reminiscent of Schur's fix for $q_3(n)>Q_3(n)$ (see \cite[Section 4.4]{AE}). We are led to another partition theorem that involves $d$-distinct partitions.
\begin{Def}\label{defgd}
For positive integers $n$ and $d$, we denote $\G_d$ the set of all partitions $\pi=(\pi_1,\pi_2,\ldots,\pi_r)$ satisfying the following two conditions, and we denote $g_d(n)$ the number of partitions $\pi\in \G_d$ with $\Gamma(\pi)=n$.
\begin{enumerate}[i.]
\item $\pi_i\equiv 1 \text{ or } d+2 \pmod{2d+1}$, for $i=1,2,\ldots,r$;
\item $\pi_i-\pi_{i+1}\leq 2d+1$, for $i=1,2,\ldots,r$, where $\pi_{r+1}=0$, with strict sign taken whenever $\pi_i\equiv 1\pmod{2d+1}$.
\end{enumerate}
\end{Def}
\begin{theorem}\label{gen2}
For all $d\geq 1$, we have
\begin{align}
G_d(x,y,q) &:=\sum\limits_{\pi\in\G_d}x^{\pi_1}y^{\ell(\pi)}q^{\Gamma(\pi)}=\dfrac{xyq(1-yq+x^{d+1}q^{d+1})}{1-2yq+y^2q^2-x^{2d+1}yq^{2d+2}}.
\label{gfGd}
\end{align}
In particular, we have for all $n\geq 1$
\begin{align}\label{h=f=g}
h_d(n)=f_d(n)=g_d(n).
\end{align}
\end{theorem}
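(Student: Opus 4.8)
The plan is to reuse the profile encoding from the proof of Theorem~\ref{gfxyq}, but now---because the two defining conditions of $\G_d$ couple the residues of the parts with the sizes of the gaps between them---I will run a small finite-state transfer argument rather than read off a single geometric series. Recall that a non-empty partition $\pi$ with $\Gamma(\pi)=n$ corresponds to a word $E^{b_1}NE^{b_2}N\cdots E^{b_r}N$ of length $n+1$, where $r=\ell(\pi)$, $b_1\ge 1$, $b_2,\dots,b_r\ge 0$, and the partial sums $b_1+\cdots+b_j$ are exactly the parts of $\pi$ read from the smallest upward; thus $b_1=\pi_r$ and, for $j\ge 2$, $b_j=\pi_{r-j+1}-\pi_{r-j+2}$ are the successive gaps (with $\pi_{r+1}=0$). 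Grouping the word into blocks $[E^{b_i}N]$, the $i$-th block carries weight $(xq)^{b_i}(yq)$ toward the monomial $x^{\pi_1}y^{\ell(\pi)}q^{\pi_1+\ell(\pi)}$, so that $G_d(x,y,q)=q^{-1}\sum_{\pi\in\G_d}\prod_i (xq)^{b_i}(yq)$, the factor $q^{-1}$ accounting for $\Gamma(\pi)=\pi_1+\ell(\pi)-1$.

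Next I will translate Definition~\ref{defgd} into a two-state automaton on these blocks. By condition~(i) every partial sum lies in residue class $1$ or $d+2$ modulo $2d+1$, so I track the class of the current part with a state $A$ ($\equiv 1$) or $B$ ($\equiv d+2$); a gap $b_i$ shifts the residue by $b_i$, and condition~(ii) requires $0\le b_i\le 2d+1$ with $b_i\le 2d$ exactly when the \emph{new} part is $\equiv 1\pmod{2d+1}$. Solving these constraints over $b_i\in\{0,1,\dots,2d+1\}$ will give: the initial block $[E^{b_1}N]$ has $b_1\in\{1,d+2\}$, of weight $xyq^2$ landing in $A$ or $x^{d+2}yq^{d+3}$ landing in $B$; and each later block effects $A\to A$ with $b_i=0$ (weight $yq$), $A\to B$ with $b_i=d+1$ (weight $x^{d+1}yq^{d+2}$), $B\to A$ with $b_i=d$ (weight $x^{d}yq^{d+1}$), or $B\to B$ with $b_i\in\{0,2d+1\}$ (weight $yq+x^{2d+1}yq^{2d+2}$). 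The one subtle point is this last transition: both $b_i=0$ and $b_i=2d+1$ keep the residue in class $B$ and neither forces the strict inequality, so $B\to B$ legitimately acquires a two-term weight.

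With $G_A$ and $G_B$ denoting the generating functions of block sequences ending in state $A$, respectively $B$, the automaton will yield the linear system
\[
G_A=xyq^2+yq\,G_A+x^{d}yq^{d+1}\,G_B,\qquad
G_B=x^{d+2}yq^{d+3}+x^{d+1}yq^{d+2}\,G_A+\bigl(yq+x^{2d+1}yq^{2d+2}\bigr)G_B .
\]
Eliminating $G_A$, the cross terms cancel and I expect $G_B=\dfrac{x^{d+2}yq^{d+3}}{(1-yq)^2-x^{2d+1}yq^{2d+2}}$ and then $G_A=\dfrac{xyq^2(1-yq)}{(1-yq)^2-x^{2d+1}yq^{2d+2}}$, so that $G_d=q^{-1}(G_A+G_B)$ simplifies to \eqref{gfGd} once one observes $(1-yq)^2-x^{2d+1}yq^{2d+2}=1-2yq+y^2q^2-x^{2d+1}yq^{2d+2}$. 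For the equalities \eqref{h=f=g} I will combine \eqref{d-distinct-id}, which already gives $h_d(n)=f_d(n)$, with the specialization $x=y=1$: from \eqref{gfGd} and \eqref{gfHd} one reads off $G_d(1,1,q)=\dfrac{q(1-q+q^{d+1})}{1-2q+q^2-q^{2d+2}}$ and $H_d(1,1,q)=\dfrac{q}{1-q-q^{d+1}}$, and these coincide because $(1-q+q^{d+1})(1-q-q^{d+1})=(1-q)^2-q^{2d+2}=1-2q+q^2-q^{2d+2}$; extracting the coefficient of $q^n$ gives $g_d(n)=h_d(n)=f_d(n)$.

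The hard part will be pinning down the automaton exactly---above all, that the strictness in condition~(ii) is controlled by the residue of the \emph{upper} (larger) part, which in the bottom-up reading is the destination state and hence constrains the gap entering a block, and that this is precisely why $B\to B$ is the unique two-branch transition; I would also want to sanity-check the boundary bookkeeping (the forced leading $E$ and trailing $N$, and the global $q^{-1}$) against a couple of small cases such as $d=1$ with $n\le 3$. Once the transfer weights are correct, the remainder is routine linear algebra together with the difference-of-squares factorization recorded above.
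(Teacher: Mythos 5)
Your proof is correct; I checked the key steps. The transition table is exactly what conditions i--ii of Definition~\ref{defgd} force on one pair of consecutive parts: the initial gap $b_1\in\{1,d+2\}$, the single-branch transitions $A\to A$ ($b=0$), $A\to B$ ($b=d+1$), $B\to A$ ($b=d$), and the two-branch $B\to B$ weight $yq+x^{2d+1}yq^{2d+2}$ coming from $b\in\{0,2d+1\}$ (since strictness in condition ii is governed by the upper, i.e.\ destination, part). The linear system does solve to $G_B=\frac{x^{d+2}yq^{d+3}}{(1-yq)^2-x^{2d+1}yq^{2d+2}}$ and $G_A=\frac{xyq^2(1-yq)}{(1-yq)^2-x^{2d+1}yq^{2d+2}}$, and $q^{-1}(G_A+G_B)$ simplifies to \eqref{gfGd}. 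Your route differs from the paper's in how the profile word is segmented. The paper cuts $w_\pi$ into an initial block $EN^j$, middle blocks of two types ($E^{d+1}N^j$ and $NE^dN^j$) that are shown to alternate starting with type I, and a terminal $N$; the alternation is justified by tracing the profile and watching the residue of the current largest part, after which the generating function is a geometric series in the product of the two middle-block weights. You replace that global structural claim by a purely local one: each block $E^{b_i}N$ is a single transition of a residue-tracking automaton, so the only combinatorial verification needed is the transition table, and the paper's type-I/type-II alternation emerges automatically from the structure of the transition graph. The price is solving a $2\times 2$ linear system instead of summing a geometric series; the benefit is that nothing global needs to be argued. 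Your derivation of \eqref{h=f=g}, via $x=y=1$ and the factorization $(1-q+q^{d+1})(1-q-q^{d+1})=(1-q)^2-q^{2d+2}$ combined with \eqref{d-distinct-id}, is identical to the paper's.
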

\begin{proof}
Take $x=y=1$ in \eqref{gfGd} gives
\begin{align*}
\sum_{n\geq 1}g_d(n)q^n=G_d(1,1,q)=\dfrac{q(1-q+q^{d+1})}{(1-q)^2-q^{2d+2}}=\dfrac{q}{1-q-q^{d+1}}=H_d(1,1,q)=F_d(1,1,q),
\end{align*}
and we get \eqref{h=f=g}. To prove \eqref{gfGd} we need to translate conditions i and ii for a given partition $\pi\in\G_d$ into restrictions on the word, say $w_{\pi}$, that corresponds to the profile of $\pi$. We claim that we can always divide $w_{\pi}$ uniquely into blocks of the following types, with all the middle blocks having ``type I, type II, type I, type II,$\ldots$'' alternately. In the following, $j=0,1,2,\ldots$ can be any natural number.
\begin{itemize}
\item Initial block: A single $E$ followed by $j$ consecutive $N$s.
\item Middle block of type I: A ($d+1$)-tuple $EE\cdots E$ followed by $j$ consecutive $N$s.
\item Middle block of type II: A ($d+1$)-tuple $NE\cdots E$ followed by $j$ consecutive $N$s.
\item Terminal block: consists of a single $N$.
\end{itemize}
Conversely, given any word that consists of these four types of blocks such that the middle blocks, if any, start with type I and switch between type I and type II, we can uniquely realize it as the profile of certain partition in $\G_d$. To prove this claim, one can trace the profile of any $\pi\in\G_d$, from the southwest-most edge to the northeast-most edge to see how $\pi$ is ``outlined'' edge by edge, and at the same time go through the word $w_{\pi}$ letter by letter from left to right. Then we see
\begin{itemize}
\item The initial block in $w_{\pi}$ corresponds to $j$ copies of $1$ as parts in $\pi$.
\item Having a middle block of type I corresponds to making the current largest part (which is necessarily $\equiv 1 \pmod{2d+1}$), say $k(2d+1)+1$, becomes $k(2d+1)+d+2$, and adding $j$ copies of it on top of the current sub-partition.
\item Having a middle block of type II corresponds to keeping the current largest part (which is necessarily $\equiv d+2 \pmod{2d+1}$), say $k(2d+1)+d+2$, and adding $j$ copies of $(k+1)(2d+1)+1$ on top of the current sub-partition.
\item The terminal block ends the process and we have a complete $\pi$, corresponding to the completed word $w_{\pi}$.
\end{itemize}
A moment of reflection reveals that in the above process of traversing the profile of $\pi$, the largest part for each sub-partition must be $\equiv 1 \pmod{2d+1}$ (resp. $\equiv d+2 \pmod{2d+1}$) after each block of type II (resp. type I), and the two middle types alternate will force condition ii, vice versa. We provide one example of $d=2$ in Figure~\ref{block}. Next we only need to figure out the generating function for each type of block.
\begin{itemize}
\item Initial$+$terminal: $xyq(1+yq+y^2q^2+\cdots)=\dfrac{xyq}{1-yq}$.
\item Type I: $x^{d+1}q^{d+1}(1+yq+y^2q^2+\cdots)=\dfrac{x^{d+1}q^{d+1}}{1-yq}$.
\item Type II: $x^dyq^{d+1}(1+yq+y^2q^2+\cdots)=\dfrac{x^{d}yq^{d+1}}{1-yq}$.
\end{itemize}
Together we have
\begin{align*}
G_d(x,y,q) &=\dfrac{xyq}{1-yq}(1+\dfrac{x^{d+1}q^{d+1}}{1-yq}+\dfrac{x^{d+1}q^{d+1}}{1-yq}\dfrac{x^{d}yq^{d+1}}{1-yq}+\dfrac{x^{d+1}q^{d+1}}{1-yq}\dfrac{x^{d}yq^{d+1}}{1-yq}\dfrac{x^{d+1}q^{d+1}}{1-yq}+\cdots)\\
&=\dfrac{\dfrac{xyq}{1-yq}(1+\dfrac{x^{d+1}q^{d+1}}{1-yq})}{1-\dfrac{x^{d+1}q^{d+1}}{1-yq}\dfrac{x^{d}yq^{d+1}}{1-yq}}=\dfrac{xyq(1-yq+x^{d+1}q^{d+1})}{1-2yq+y^2q^2-x^{2d+1}yq^{2d+2}}.
\end{align*}
This completes the proof.
\end{proof}
%%%%%%%%%%Fig 3%%%%%%%%%%%%%%%%%%%%%%%%%
\begin{figure}
\begin{tikzpicture}[scale=0.5]
% grid
\draw (0,0) grid (1,9);
\draw (1,3) grid (4,9);
\draw (4,4) grid (6,9);
\draw (6,7) grid (9,9);
\path[draw, line width=2pt] (0,0) -- (1,0) -- (1,3) -- (4,3) -- (4,4) -- (6,4) -- (6,7) -- (9,7) -- (9,9);
% lable
\draw (10,4.5) node{$\Longleftrightarrow$};
\draw (20,4.5) node{$ENNN\mid EEE\mid NEENNN\mid EEEN\mid N$};
\draw (14,6) node{{\small initial}};
\draw (17,6) node{{\small type I}};
\draw (21,6) node{{\small type II}};
\draw (25,6) node{{\small type I}};
\draw (28,6) node{{\small terminal}};
\end{tikzpicture}
\caption{Decomposition of $w_{\pi}$ into blocks for $\pi=(9,9,6,6,6,4,1,1,1)$}
\label{block}
\end{figure}
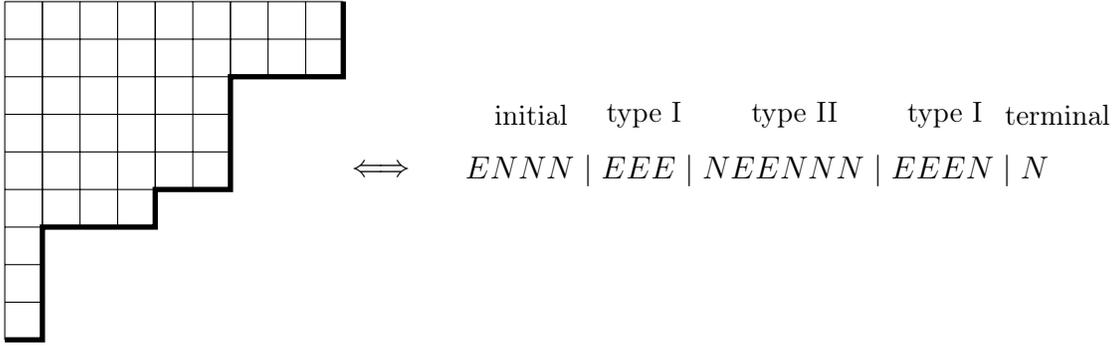
When we take $d=2$ in \eqref{h=f=g}, we get the following intriguing analogue of the first Rogers-Ramanujan identity.
\begin{corollary}
For any positive integer $n$, the number of $2$-distinct partitions with perimeter $n$ is equal to the number of partitions into parts $\equiv 1,4 \pmod 5$ with perimeter $n$ and satisfying condition ii with $d=2$ in Definition~\ref{defgd}.
\end{corollary}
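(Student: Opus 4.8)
The plan is to deduce the corollary directly from the already-established chain \eqref{h=f=g} of Theorem~\ref{gen2}, specialised to $d=2$. First I would unwind Definition~\ref{defgd} in the case $d=2$: there $2d+1=5$ and $d+2=4$, so $\G_2$ consists of exactly those partitions $\pi=(\pi_1,\dots,\pi_r)$ whose parts all satisfy $\pi_i\equiv 1$ or $4\pmod 5$ (condition~i) and which satisfy $\pi_i-\pi_{i+1}\le 5$ for every $i$, with $\pi_{r+1}:=0$ and with strict inequality whenever $\pi_i\equiv 1\pmod 5$ (condition~ii with $d=2$). Consequently $g_2(n)$, the number of $\pi\in\G_2$ with $\Gamma(\pi)=n$, is precisely the quantity named on the right-hand side of the corollary (recall that $\Gamma(\pi)$ is the perimeter). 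On the other side, $h_2(n)$ is by definition the number of $2$-distinct partitions with $\Gamma(\pi)=n$, i.e.\ with perimeter $n$.

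With the two sides identified, I would simply invoke \eqref{h=f=g}: its $d=2$ instance reads $h_2(n)=f_2(n)=g_2(n)$, and dropping the middle term yields $h_2(n)=g_2(n)$, which is the asserted equality. Nothing new has to be proved, since Theorem~\ref{gen2} already supplies the block decomposition of the profile word $w_\pi$ for members of $\G_d$, the generating function \eqref{gfGd}, and the algebraic collapse $\frac{q(1-q+q^{d+1})}{(1-q)^2-q^{2d+2}}=\frac{q}{1-q-q^{d+1}}$ at $x=y=1$ that matches $G_d(1,1,q)$ with $H_d(1,1,q)$.

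I do not anticipate a genuine obstacle here; the only care needed is the translation of the parameters of Definition~\ref{defgd} at $d=2$ into the wording ``parts $\equiv 1,4\pmod 5$'', together with the observation — already recorded in the paragraph preceding the corollary — that this family is a \emph{proper} subfamily of all partitions into parts $\equiv 1,4\pmod 5$ (condition~ii being an extra constraint), which is exactly why the statement is an \emph{analogue} of the first Rogers--Ramanujan identity rather than its verbatim perimeter counterpart. If a self-contained proof were desired, one could instead construct a direct bijection between $2$-distinct partitions of a given perimeter and members of $\G_2$ of the same perimeter by matching their profile words block by block, but this merely re-derives the $d=2$ case of Theorem~\ref{gen2} and is unnecessary.
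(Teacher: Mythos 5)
Your proposal is correct and matches the paper exactly: the corollary is obtained there simply by setting $d=2$ in \eqref{h=f=g} of Theorem~\ref{gen2} and translating Definition~\ref{defgd} (with $2d+1=5$, $d+2=4$) into the wording ``parts $\equiv 1,4\pmod 5$ satisfying condition ii.'' No further argument is needed or given in the paper.
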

\begin{example}
The partitions listed in Table \ref{tab4} all satisfy $\Gamma(\pi)\leq 7$. More precisely, we have $2$-distinct partitions in the first column, paired with partitions into parts $\equiv 1\pmod{3}$ in the second column, and with partitions into parts $\equiv 1,4 \pmod{5}$ satisfying the extra condition ii listed in the third column.
\end{example}
%%%%%%%%%%Tab 4%%%%%%%%%%%%%%%%%%%%%%%%%
\begin{table}[htbp]\caption{}
\centering
\begin{tabular}{|c|c|c|}
\hline
$2$-distinct & $\equiv 1 \pmod 3$ & condition i, ii\\
\hline
(1) & (1) & (1)\\
\hline
(2) & (1,1) & (1,1)\\
\hline
(3) &( 1,1,1) & (1,1,1)\\
\hline
(4) & (1,1,1,1) & (1,1,1,1)\\
(3,1) & (4) & (4)\\
\hline
(5) & (1,1,1,1,1) & (1,1,1,1,1)\\
(4,2) & (4,1) & (4,1)\\
(4,1) & (4,4) & (4,4)\\
\hline
(6) & (1,1,1,1,1,1) & (1,1,1,1,1,1)\\
(5,3) & (4,1,1) & (4,1,1)\\
(5,2) & (4,4,1) & (4,4,1)\\
(5,1) & (4,4,4) & (4,4,4)\\
\hline
(7) & (1,1,1,1,1,1,1) & (1,1,1,1,1,1,1)\\
(6,4) & (4,1,1,1) & (4,1,1,1)\\
(6,3) & (4,4,1,1) & (4,4,1,1)\\
(6,2) & (4,4,4,1) & (4,4,4,1)\\
(6,1) & (4,4,4,4) & (4,4,4,4)\\
(5,3,1) & (7) & (6,4)\\
\hline
\end{tabular}
\label{tab4}
\end{table}

\section{An analogue of Euler's pentagonal number theorem}\label{sec:ana-pent}
%%%%%%%%%%%%%%

\subsection{Pentagonal number theorem and a new analogue}
Straub's novel decision to fix $\Gamma(\pi)$ instead of $|\pi|$ proves successful in Theorem~\ref{ana-od}. This encourages us to seek for other analogues of the classic partition theorems with this new mindset. As mentioned in the introduction, Theorem~\ref{ana-pent} is a new partition theorem that parallels Euler's Pentagonal Number Theorem nicely. We present two proofs of it in this subsetion, and we continue to discuss yet another, purely combinatorial proof and its implications in next subsection.

\begin{proof}[1st proof of Theorem~\ref{ana-pent}]
For all partitions in $\H_{\D}(n)$, we want to calculate the number of those with even number of parts in excess of those with odd number of parts. To this end, we simply take $x=1,y=-1$ in \eqref{gfHD} and get
\begin{align*}
\sum\limits_{n=1}^{\infty} e(n)q^n &=H_{\D}(1,-1,q) = \dfrac{-q}{1-q+q^2}= \dfrac{-q}{(1-\omega q)(1-q/\omega)}\\
&=\dfrac{-1}{\sqrt{3}\:i}(\dfrac{1}{1-\omega q}-\dfrac{1}{1-q/\omega})= \dfrac{-1}{\sqrt{3}\:i}\sum\limits_{n=0}^{\infty}(\omega^n-\omega^{-n})q^n,
\end{align*}
where $\omega=e^{\pi i/3}$ is a sixth root of unity. Now a simple calculation verifies \eqref{id:ana-pent}.
\end{proof}

In preparation for our second proof of Theorem~\ref{ana-pent}, we first derive the recurrence relation for $h_{\D,\E}(n)$ and $h_{\D,\O}(n)$.
\begin{proposition}\label{rr-hDEO}
The numbers $h_{\mathcal{D, E}}(n)$ and $h_{\mathcal{D, O}}(n)$ satisfy the following system of recurrence relations, for all $n\geq 3$:
\begin{align}
h_{\mathcal{D, E}}(n) &=h_{\mathcal{D, E}}(n-1)+h_{\mathcal{D, O}}(n-2),\label{rrhDE}\\
h_{\mathcal{D, O}}(n) &=h_{\mathcal{D, O}}(n-1)+h_{\mathcal{D, E}}(n-2),\label{rrhDO}
\end{align}
with the initial values
\begin{gather}\label{iv}
h_{\D,\E}(1)=h_{\mathcal{D, E}}(2) = 0, h_{\mathcal{D, O}}(1)=h_{\mathcal{D, O}}(2)=1.\notag
\end{gather}
\end{proposition}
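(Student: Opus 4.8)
The plan is to read the recurrences directly off the generating function $H_{\D}(x,y,q)$ obtained in Theorem~\ref{gfxyq}, specialized at $x=1$. Setting $x=1$ in \eqref{gfHD} gives
\[
H_{\D}(1,y,q)=\sum_{n\geq 1}\sum_{m=1}^{n}h_{\D}(m,n)\,y^{n+1-m}q^n=\frac{yq}{1-q-yq^{2}}.
\]
Now $h_{\D,\E}(n)$ (resp. $h_{\D,\O}(n)$) is the sum of those $h_{\D}(m,n)$ for which the number of parts $\ell(\pi)=n+1-m$ is even (resp. odd); equivalently, if we write $A(y,q):=H_{\D}(1,y,q)$, then $h_{\D,\E}(n)q^n$ is the ``even in $y$'' part of the coefficient of $q^n$, picked out by $\tfrac12\big(A(1,q)+A(-1,q)\big)$, and similarly $h_{\D,\O}(n)$ via $\tfrac12\big(A(1,q)-A(-1,q)\big)$. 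So the first step is to set
\[
E(q):=\sum_{n\geq 1}h_{\D,\E}(n)q^n,\qquad O(q):=\sum_{n\geq 1}h_{\D,\O}(n)q^n,
\]
and record $E(q)+O(q)=A(1,q)=\dfrac{q}{1-q-q^{2}}$ and $O(q)-E(q)=-A(-1,q)=\dfrac{q}{1-q+q^{2}}$ (the latter matching the first proof of Theorem~\ref{ana-pent}).

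The second step is to turn these two rational generating functions into the claimed recurrences. From $E(q)+O(q)=\dfrac{q}{1-q-q^2}$ we get $(1-q-q^2)(E+O)=q$, i.e. for $n\geq 3$,
\[
(h_{\D,\E}(n)+h_{\D,\O}(n))-(h_{\D,\E}(n-1)+h_{\D,\O}(n-1))-(h_{\D,\E}(n-2)+h_{\D,\O}(n-2))=0,
\]
and from $O(q)-E(q)=\dfrac{q}{1-q+q^2}$ we get $(1-q+q^2)(O-E)=q$, i.e. for $n\geq 3$,
\[
(h_{\D,\O}(n)-h_{\D,\E}(n))-(h_{\D,\O}(n-1)-h_{\D,\E}(n-1))+(h_{\D,\O}(n-2)-h_{\D,\E}(n-2))=0.
\]
Adding and subtracting these two scalar recurrences (and dividing by $2$) yields exactly \eqref{rrhDE} and \eqref{rrhDO}. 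Finally, the initial values are checked by direct enumeration: the only partition $\pi$ with $\Gamma(\pi)=1$ is $(1)$ (one part, odd), and for $\Gamma(\pi)=2$ the distinct-part partitions are $(2)$ and $(1)$... wait, $(1)$ has $\Gamma=1$; the distinct partitions with $\Gamma=2$ are $(2)$ and $(1,1)$ is not distinct, so it is just $(2)$, a single part — hence $h_{\D,\O}(1)=h_{\D,\O}(2)=1$ and $h_{\D,\E}(1)=h_{\D,\E}(2)=0$, matching \eqref{iv}.

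Alternatively, and perhaps more in the spirit of the paper, one can argue combinatorially using the profile/word model: a distinct partition with $\Gamma(\pi)=n$ corresponds to a word of length $n+1$ starting with $E$, ending with $N$, and with no two consecutive $N$'s among the middle $n-1$ letters, and $\ell(\pi)$ equals the number of $N$'s. Stripping off the last letter, one obtains a recursive description of such words by what their penultimate letter is, which translates into removing either one $E$ (decreasing $n$ by $1$, not changing the parity of $\ell$) or a block $NE$ (decreasing $n$ by $2$, flipping the parity of $\ell$); this is precisely the content of \eqref{rrhDE}–\eqref{rrhDO}. The main obstacle, such as it is, is purely bookkeeping: making sure the parity tracking and the index shifts are aligned so that the ``even'' recurrence picks up the ``odd'' term at $n-2$ and vice versa, and verifying that the recurrences are valid exactly from $n\geq 3$ on (they fail at $n=2$, which is why the two initial values are needed). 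I would present the generating-function derivation as the main proof since Theorem~\ref{gfxyq} is already available, and mention the bijective reading as a remark.
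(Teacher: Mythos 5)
Your argument is correct, but it takes a genuinely different route from the paper. The paper proves the recurrences by a direct bijection on Young diagrams: given $\pi\in\H_{\D,\E}(n)$, one examines the smallest part $\pi_r$ and subtracts $1$ from every part (i.e.\ deletes the leftmost column); when $\pi_r=1$ this drops one part and sends $\pi$ to a partition in $\H_{\D,\O}(n-2)$, while when $\pi_r>1$ it preserves the number of parts and lands in $\H_{\D,\E}(n-1)$, and both operations are invertible. Your main proof instead extracts the recurrences from the specialization $H_{\D}(1,y,q)=yq/(1-q-yq^2)$ of Theorem~\ref{gfxyq} by separating the $y=1$ and $y=-1$ evaluations and recombining the two scalar recurrences; the computation checks out (including the sign bookkeeping that makes the ``even'' recurrence pick up the ``odd'' term at $n-2$ and vice versa), and the initial values are verified correctly. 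What each approach buys: the paper's proof is self-contained, bijective, and deliberately independent of \eqref{gfHD} --- this matters because Proposition~\ref{rr-hDEO} is then used for the \emph{second} proof of Theorem~\ref{ana-pent}, which is advertised as being free of generating functions; your derivation routes through $H_{\D}(1,-1,q)=-q/(1-q+q^2)$, which is essentially the computation in the \emph{first} proof of Theorem~\ref{ana-pent}, so while there is no logical circularity, it would defeat the purpose of having two independent proofs of that theorem. Your sketched alternative via the profile words (peeling off a terminal $E$ or $NE$ block of the middle segment) is essentially a re-encoding of the paper's column-deletion bijection and could be made rigorous with the bookkeeping you flag.
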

\begin{proof}
We show \eqref{rrhDE} combinatorially by investigating the smallest part, say $\pi_r$, in a given partition $\pi\in\H_{\D,\E}(n)$.
\begin{itemize}
\item $\pi_r=1$. Then $\pi_{r-1}\geq 2$ and we can remove the leftmost column in the Ferrers graph of $\pi$, or equivalently, subtract $1$ from each part of $\pi$. This results in a new partition $\hat{\pi}$ with one fewer parts, thus $\ell(\hat{\pi})$ is odd and $\Gamma(\hat{\pi})=n-2$. This operation is clearly seen to be invertible. Indeed, given a partition $\hat{\pi}\in\H_{\D,\O}(n-2)$, we simply add $1$ to each part of $\hat{\pi}$, as well as a new part of $1$ to arrive at a partition $\pi\in\H_{\D,\E}(n)$.
\item $\pi_r>1$. We also subtract $1$ from each part of $\pi$ to get $\hat{\pi}$. But this time $\hat{\pi}$ has the same number of parts as $\pi$, so $\ell(\hat{\pi})$ remains even and $\Gamma(\hat{\pi})=n-1$. This operation is also invertible.
\end{itemize}
Combining these two cases gives us \eqref{rrhDE}. \eqref{rrhDO} can be deduced similarly and thus omitted.
\end{proof}

It is now a routine exercise to check the following formulas for $h_{\D,\E}(n)$ and $h_{\D,\O}(n)$ using Proposition~\ref{rr-hDEO} and the Pascal relation for the binomial coefficients. So we supply here a direct combinatorial proof that do not need recurrence relations \eqref{rrhDE} and \eqref{rrhDO}.
\begin{proposition}\label{formula-hDEO}
For any positive integer $n\geq 1$,
\begin{align}
h_{\D,\E}(n) &=\sum_{k\geq0}\binom{n-2k-2}{2k+1},\label{f-hDE}\\
h_{\D,\O}(n) &=\sum_{k\geq0}\binom{n-2k-1}{2k}.\label{f-hDO}
\end{align}
\end{proposition}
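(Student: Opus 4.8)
The plan is to give a direct combinatorial proof by exhibiting, for each partition counted by $h_{\D,\E}(n)$ (respectively $h_{\D,\O}(n)$), a canonical encoding that is manifestly counted by one term $\binom{n-2k-2}{2k+1}$ (respectively $\binom{n-2k-1}{2k}$). The natural vehicle is the profile word $w_\pi$ already introduced before Theorem~\ref{gfxyq}: a partition $\pi\in\D$ with $\Gamma(\pi)=n$ corresponds bijectively to a length-$(n+1)$ word over $\{E,N\}$ that starts with $E$, ends with $N$, and has no two consecutive $N$'s (the distinctness condition, exactly as used in the proof of \eqref{gfHD}). Moreover $\ell(\pi)$ equals the number of $N$'s in $w_\pi$, so the parity of $\ell(\pi)$ is the parity of the number of $N$'s.

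\smallskip

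First I would fix notation: write $w_\pi=E\,u\,N$ where $u$ is a word of length $n-1$ over $\{E,N\}$ with no two consecutive $N$'s and, crucially, $u$ does not begin with $N$ (else $Eu$ would start $EN$... wait, that is allowed) — more precisely the only adjacency constraint beyond ``$u$ has no $NN$'' is that $u$ cannot end in $N$, since that $N$ would be adjacent to the terminal $N$. So the distinct partitions with $\Gamma(\pi)=n$ are in bijection with words $u\in\{E,N\}^{n-1}$ having no two consecutive $N$'s and not ending in $N$; equivalently (appending the forced terminal $N$) with binary words of length $n$ whose $N$'s form an independent set in the path and whose last letter is $N$. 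If $\pi$ has exactly $2k+2$ parts then $w_\pi$ has $2k+2$ letters $N$; the terminal one is forced, so the remaining $2k+1$ copies of $N$ sit among the interior letters. I would then count: placing $j$ non-adjacent $N$'s among the $n-1$ interior slots, with no $N$ in the last interior slot (to avoid adjacency with the terminal $N$) and the first slot free — a standard stars-and-bars / ``gap method'' count. Inserting $j$ non-adjacent $N$'s into a row of $E$'s, where the $j$ $N$'s together with the mandatory separators occupy $2j-1$ of the interior slots but the terminal $N$ also forbids the last slot, one gets exactly $\binom{(n-1)-j-(j-1)}{j}=\binom{n-2j}{j}$ choices — but one must subtract the configurations where an interior $N$ lands in the last slot. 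Setting $j=2k+1$ and bookkeeping the boundary slot carefully should collapse to $\binom{n-2k-2}{2k+1}$; summing over $k\ge 0$ gives \eqref{f-hDE}, and the same argument with $j=2k$ even (so $\ell(\pi)=2k+1$, with one $N$ being terminal leaves $2k$ interior $N$'s... here I must be careful: odd $\ell$ means $2k+1$ total $N$'s, so $2k$ interior) yields $\binom{n-2k-1}{2k}$ and hence \eqref{f-hDO}.

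\smallskip

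An alternative, perhaps cleaner route avoids the boundary fuss entirely: encode $\pi\in\D$ via the block decomposition used in the proof of \eqref{gfHD}, namely $w_\pi = E\,(B_1 B_2 \cdots)\,N$ where each middle block $B_i$ is either a single $E$ (weight $xq$, contributing no $N$) or a pair $NE$ (weight $xyq^2$, contributing one $N$). If $\pi$ uses exactly $m$ middle blocks of the first type and $j$ of the second type, then $j$ of the blocks are $NE$ and $m$ are $E$, giving $\ell(\pi)=j+1$ (the $j$ interior $N$'s plus the terminal $N$) and $\Gamma(\pi)=n$ forces $1 + m + 2j + 1 = n+1$, i.e. $m = n-1-2j$. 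The middle blocks may be interleaved in any order, so the number of such $\pi$ with this $(m,j)$ is $\binom{m+j}{j}=\binom{n-1-j}{j}$. Now $\ell(\pi)$ even $\iff$ $j$ odd $\iff$ $j=2k+1$, giving $\binom{n-2-2k}{2k+1}$, which summed over $k\ge0$ is \eqref{f-hDE}; and $\ell(\pi)$ odd $\iff$ $j=2k$, giving $\binom{n-1-2k}{2k}$, summed is \eqref{f-hDO}. This is entirely routine once the block decomposition is in hand.

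\smallskip

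The only genuine obstacle I anticipate is getting the index arithmetic exactly right — which block contributes the $+1$ to $\ell(\pi)$ (the forced terminal $N$), and the constraint $m=n-1-2j\ge 0$ which is what makes the sum finite and matches the implicit range of summation in \eqref{f-hDE}–\eqref{f-hDO} (the binomial coefficient vanishing for $k$ too large handles this automatically, so no separate range need be stated). I would double-check the base cases $n=1,2,3$ against Proposition~\ref{rr-hDEO}'s initial values and against the recurrences \eqref{rrhDE}–\eqref{rrhDO} — indeed one could alternatively prove \eqref{f-hDE}–\eqref{f-hDO} purely from Proposition~\ref{rr-hDEO} by verifying the right-hand sides satisfy the same recurrence via the Pascal identity $\binom{a}{b}=\binom{a-1}{b}+\binom{a-1}{b-1}$, but the block-decomposition argument above is self-contained and is the proof I would present.
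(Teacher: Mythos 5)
Your second (block-decomposition) argument, which you state is the proof you would present, is essentially identical to the paper's proof: both decompose the profile word of a distinct partition into an initial $E$, a terminal $N$, and a middle consisting of $j$ blocks $NE$ and $m=n-1-2j$ single $E$'s arranged freely, giving $\binom{n-1-j}{j}$ and splitting by the parity of $j$. The first sketch's boundary bookkeeping is never completed, but since you discard it in favor of the block argument, the proposal is correct and matches the paper.
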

\begin{proof}
We show proof of \eqref{f-hDE} only, since \eqref{f-hDO} can be proved similarly. Recall from the proof of \eqref{gfHD} that for a distinct partition $\pi$, the profile word $w_{\pi}$ can only have $E$s or couples $NE$ in the middle, with the fixed initial $E$ and the terminal $N$. And between $E$ and $NE$, only $NE$ can affect the value of $\ell(\pi)$. Therefore, if $\pi\in\H_{\D,\E}(n)$, then with the terminal $N$ accounting for the largest part, we should have an odd number of $NE$s in $w_{\pi}$, say $2k+1$ for some $k\geq 0$. Since the total length of $w_{\pi}$ is $n+1$ (see \eqref{Gamma}), we must have $n+1-2(2k+1)-1-1=n-4k-3$ copies of $E$s remained in the middle, excluding the initial $E$. Now these $n-4k-3$ copies of $E$s and $2k+1$ copies of $NE$s can be arranged in any possible order, which gives rise to $\binom{n-2k-2}{2k+1}$. We sum over all $k\geq 0$ to finish the proof.
\end{proof}

Our next proof builds on Proposition~\ref{rr-hDEO} and is free of generating function \eqref{gfHD}.

\begin{proof}[2nd proof of Theorem~\ref{ana-pent}]
For all positive integer $n$, define $e(n):=h_{\D,\E}(n)-h_{\D,\O}(n)$. Now we can subtract \eqref{rrhDO} from \eqref{rrhDE} to get $e(n)=e(n-1)-e(n-2)$, then iterate it twice we have
\begin{align*}
e(n)=-e(n-3),\quad \forall n\geq 4.
\end{align*}
Therefore $e(n)$ has a period of $6$, and it now suffice to calculate $e(1)=e(2)=-1,e(3)=0$ so as to establish \eqref{id:ana-pent} and completes the proof.
\end{proof}
\begin{remark}
Not surprisingly, we are not the first to stumble on the sequence $\{h_{\D,\E}(n)\}_{n\geq 1}$, see \href{http://oeis.org/A024490}{\tt{oeis:A024490}}. This is where we tracked Munarini and Salvi's paper \cite{MS}, in which both our Proposition~\ref{rr-hDEO} and \ref{formula-hDEO} have been derived in a quite different setting.
\end{remark}

%The values of sequences $\sigma_{n}$ are
%\begin{table}[htbp]\caption{}
%\centering
%\begin{tabular}{lcccccccccccccccccc}
%\hline
% $n$ &1 &2 &3 &4 &5 &6 &7 &8 &9 &10 &11 &12 &13 &14 &15 &16 &17 &18\\
%\hline
%$p_{\mathcal{H, D, E}}(n)$ &0 &0 &1 &2 &3 &4 &6 &10 &17 &28 &45 &72 &116 &188 &305 &494 &799 &1292\\
%\hline
%$p_{\mathcal{H, D, O}}(n)$ &1 &1 &1 &1 &2 &4 &7 &11 &17 &27 &44 &72 &117 &189 &305 &493 &798 &1292\\
%\hline
%$\sigma_{n}$ &-1 &-1 &0 &$1$ &$1$ &0 &-1 &-1 &0 &$1$ &$1$ &0 &-1 &-1 &0 &$1$ &$1$ &0\\
%\hline
%\end{tabular}
%\end{table}

\subsection{Franklin's involution}
Readers who are unfamiliar with Franklin's ingenious proof \cite{Fran} of Euler's pentagonal number theorem, are referred to \cite[Section 3.5]{AE} for an enlightening exposition. Now if one takes another look at Theorem~\ref{ana-pent} with Franklin's involution in mind, it should not take long before she/he realizes that, Franklin's involution between distinct partitions with even number of parts and those with odd number of parts not only preserve the size of the two partitions that get paired up, but also keep their perimeter unchanged! According to Andrews \cite{Andr3}, this crucial observation was first made by Subbarao \cite{Sub}, then Andrews \cite{Andr2} extended the idea considerably to give a combinatorial proof of the following Rogers--Fine identity:
\begin{align}\label{R-F}
\sum_{n=0}^{\infty}\dfrac{(\alpha)_n}{(\beta)_n}\tau^n &=\sum_{n=0}^{\infty}\dfrac{(\alpha)_n(\alpha\tau q/\beta)_n\beta^n\tau^nq^{n^2-n}(1-\alpha\tau q^{2n})}{(\beta)_n(\tau)_{n+1}}.
\end{align}
Here and in the sequel, we employ the customary notation $(a;q)_0:=1, \; (a)_n=(a;q)_n:=\prod_{k=0}^{n-1}(1-aq^k), \; \forall n\geq 1,\; (a)_{\infty}=(a;q)_{\infty}:=\prod_{k=0}^{\infty}(1-aq^k), \; |q|<1$.
One special case of \eqref{R-F} that concerns us here is the following identity we take from Andrews' paper \cite[(3.2)$\sim$(3.4)]{Andr2}:
\begin{align}\label{id: Andrews}
\sum\limits_{n=0}^{\infty}\dfrac{(-1)^ny^{2n}q^{n(n+1)/2}}{(yq)_n}&=
1+\sum\limits_{n=1}^{\infty}\sum\limits_{r=1}^{\infty}(Q_{e}(r,n)-Q_{o}(r,n))y^rq^n\\
&=1+\sum\limits_{n=1}^{\infty}(-1)^n(q^{n(3n-1)/2}y^{3n-1}+q^{n(3n+1)/2}y^{3n}),\nonumber
\end{align}
where $Q_e(r,n)$ (resp. $Q_o(r,n)$) denotes the number of distinct partitions of $n$, say $\pi$, into an even (resp. odd) number of parts such that $\pi_1+\ell(\pi)=r$.
Note that now both our Theorem~\ref{ana-pent} and Euler's pentagonal number theorem are special cases of \eqref{id: Andrews}, modulo a nuance due to \eqref{Gamma}. Indeed, given a partition $\pi$, Subbarao's original observation was on $\pi_1+\ell(\pi)$, while our perimeter $\Gamma(\pi)$ is always one less in value, and we discard entirely the empty partition that corresponds to the isolated summand $1$ in \eqref{id: Andrews}.

This new connection raises a natural question, is there a generalization of \eqref{id: Andrews} in the spirit of Sylvester's generalization \cite[(3.3)]{Andr3} of Euler's pentagonal number theorem? We give an affirmative answer with the following identity:
\begin{align}\label{Ramaphi}
\sum\limits_{n=1}^{\infty}\dfrac{x^ny^nq^{n(n+1)/2}}{(xq)_n}&=
\sum\limits_{\pi\in\D}x^{\pi_1}y^{\ell(\pi)}q^{|\pi|}=
\sum\limits_{n=1}^{\infty}\dfrac{(-yq)_{n-1}x^{2n-1}y^nq^{n(3n-1)/2}(1+xyq^{2n})}{(xq)_{n}}.
\end{align}
Note that upon taking $x=y, y=-y$ in \eqref{Ramaphi}, we get back to \eqref{id: Andrews}.
\begin{remark}
Three remarks on \eqref{Ramaphi} are in order.
\begin{itemize}
\item The first three entries from page 41 of Ramanujan's lost notebook \cite{Rama} deal with the function
\begin{align*}
\phi(a)&:=\sum_{n=0}^{\infty}\dfrac{a^nq^{n(n+1)/2}}{(bq)_n}.
\end{align*}
It is a simple matter of variable change to see the equivalence between $\phi(a)$ and the left extreme of \eqref{Ramaphi}. Consequently, \eqref{Ramaphi} is equivalent to Entry 2.6 in \cite{Rama}.
\item If we take $\beta=xq, \tau=-xyq/\alpha$, and let $\alpha\rightarrow \infty$ in \eqref{R-F}, we immediately recover the left extreme of \eqref{Ramaphi}, but the right hand side does not ``look right''. Actually this specialization gives rise to Entry 2.5 in \cite{Rama}. We recommend \cite{BY} for combinatorial proofs of all three aforementioned entries, and of many more related identities.
\item If we take $x=xq,y=yq,q=1$ in \eqref{Ramaphi} and divide both sides by $q$ to account for the ``$-1$'' in \eqref{Gamma}, we see that both extremes reduce to the same rational $\dfrac{xyq}{1-(xq+xyq^2)}$ and we recover \eqref{gfHD}.
\end{itemize}
\end{remark}

%%%%%%%%%%%%%%%%%%%%%%%%
\section{Congruence properties for $h_{\D}(n)$ and final remarks}\label{sec:cong}
%%%%%%%%%%%%%%%%%%%%%%%%
The last statement in Theorem~\ref{ana-od} links $h_{\D}(n)=h_{\O}(n)$ to the $n$-th Fibonacci number $F_n$, which we haven't explored so far. Let us first recall two properties of the Fibonacci numbers, and then use them to give congruence properties for $h_{\D}(n)$.
\begin{proposition}\label{Fm+n}
Let $m\geq 0$,$n\geq 1$ be integers, then
\begin{align}\label{id: Fm+n}
F_{m+n}=F_{m+1}F_{n}+F_{m}F_{n-1}.
\end{align}
\end{proposition}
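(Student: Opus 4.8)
The plan is to prove \eqref{id: Fm+n} by induction on $m$ with $n\geq 1$ held fixed, using only the defining recurrence $F_{k}=F_{k-1}+F_{k-2}$ together with the convention $F_0=0$, $F_1=1$ (which is consistent with the identification $H_{\D}(q)=q/(1-q-q^2)$ coming from Theorem~\ref{ana-od}, and which is forced anyway since the statement involves $F_{n-1}$ for $n\geq 1$).

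For the base cases: when $m=0$ the right-hand side is $F_1F_n+F_0F_{n-1}=F_n=F_{m+n}$, and when $m=1$ it is $F_2F_n+F_1F_{n-1}=F_n+F_{n-1}=F_{n+1}=F_{m+n}$, both immediate. For the inductive step, fix $m\geq 2$ and suppose \eqref{id: Fm+n} holds with $m$ replaced by $m-1$ and by $m-2$, for every $n\geq 1$. Applying the Fibonacci recurrence to the index $m+n$ and then the two instances of the induction hypothesis,
\begin{align*}
F_{m+n} &= F_{(m-1)+n}+F_{(m-2)+n} \\
&= \bigl(F_{m}F_n+F_{m-1}F_{n-1}\bigr)+\bigl(F_{m-1}F_n+F_{m-2}F_{n-1}\bigr) \\
&= (F_m+F_{m-1})F_n+(F_{m-1}+F_{m-2})F_{n-1} = F_{m+1}F_n+F_mF_{n-1},
\end{align*}
which is exactly the claim. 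This closes the induction.

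The argument is entirely routine; there is no real obstacle beyond bookkeeping at the boundary $n=1$, where $F_{n-1}=F_0=0$, so one should just check that the $m=0$ base case and every subsequent step stay valid there. As an alternative one could give a combinatorial proof via the tiling model implicit in the proof of \eqref{gfHD}: $F_M$ counts the ways to tile the length-$(M-1)$ middle of a distinct partition's profile word by monominoes $E$ and dominoes $NE$, and splitting such a tiling of length $m+n-1$ according to whether a domino crosses a fixed cut yields \eqref{id: Fm+n} directly (with the short cases $m=0,1$ treated by hand). I would present the two-line induction as the proof and relegate the tiling interpretation to a remark, if at all.
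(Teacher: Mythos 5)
Your induction on $m$ (with the two base cases $m=0,1$ and the two-term inductive step) is correct, and it matches the paper's approach: the paper merely asserts that the proposition "can be easily proved by induction" and cites Benjamin--Quinn for an alternative tiling proof, so you have simply written out the induction the authors had in mind. Your closing remark about the tiling/profile-word interpretation likewise mirrors the paper's pointer to the combinatorial proof.
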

\begin{proposition}\label{Fm|Fn}
Let $m$,$n$ be positive integers, if $m|n$, then $F_{m}|F_{n}$.
\end{proposition}
\begin{remark}
Proposition~\ref{Fm+n} can be easily proved by induction, then Proposition~\ref{Fm|Fn} follows from \eqref{id: Fm+n} combined with induction. It is worth recommending the award-winning book \cite{BQ}, where Benjamin and Quinn give an interesting tiling proof of Proposition~\ref{Fm+n}, among their masterful treatment of tons of other identities involving the Fibonacci numbers.
\end{remark}
Now the following congruence properties for $h_{\D}(n)$ should not come as surprise.
\begin{align}
h_{\mathcal{D}}(3n) &\equiv0\pmod{2}\label{3n}\\
h_{\mathcal{D}}(4n) &\equiv0\pmod{3}\\
h_{\mathcal{D}}(5n) &\equiv0\pmod{5}\\
h_{\mathcal{D}}(6n) &\equiv0 \pmod{8}\label{6n}\\
h_{\mathcal{D}}(6n+3) &\equiv2 \pmod{16}\label{6n+3}\\
h_{\mathcal{D, O}}(6n) &=h_{\mathcal{D, E}}(6n)\equiv0 \pmod{4}\label{DOE6n}\\
h_{\mathcal{D, O}}(6n+3) &=h_{\mathcal{D, E}}(6n+3)\equiv1 \pmod{8}\label{DOE6n+3}
\end{align}
Note that \eqref{3n} through \eqref{6n} require only Theorem~\ref{ana-od}, Proposition~\ref{Fm|Fn} and initial values, while \eqref{6n+3} needs more patience to wait until a complete period for $\{F_n \pmod{16}\}_{n\geq 1}$ emerges. Finally \eqref{DOE6n} and \eqref{DOE6n+3} are direct results of \eqref{6n} and \eqref{6n+3}, upon applying Theorem~\ref{ana-pent}. These results are by no means the complete list, but they raise the natural question of seeking partition statistic analogous to the rank (or crank) that would provide a combinatorial interpretation  of the above congruences. Lastly, it would certainly be interesting to also try and generalize both \eqref{gfH} and \eqref{gfHO} to the extent of \eqref{Ramaphi}.

%%%%%%%%%%%%%%%%%%%%%%%%%%
\section*{Acknowledgement}
%%%%%%%%%%%%%%%%%%%%%%%%%%
The authors are grateful to Jiang Zeng for bringing Straub's paper \cite{Str} to their attention and for some initial discussions that motivated this work. Thanks also go to Zichen Yang for suggesting a better form of Theorem~\ref{ref1}.

%And the authors also acknowledge the helpful suggestions made by the referee.

Both authors were supported by the Fundamental Research Funds for the Central Universities (No.~CQDXWL-2014-Z004) and the National Science Foundation of China (No.~115010\\61).

\end{document}